\newtheorem{theorem}{Theorem}[section]
\newtheorem{proposition}[theorem]{Proposition}
\newtheorem{corollary}[theorem]{Corollary}
\newtheorem{lemma}[theorem]{Lemma}
\newtheorem{remark}[theorem]{Remark}
\numberwithin{equation}{section}
\def\IF{{\mathbb F}}
\def\IC{{\mathbb C}}
\def\IR{{\mathbb R}}
\def\IA{{\mathbb A}}
\def\cN{{\mathcal N}}
\def\IH{{\mathbb H}}
\def\bV{{\mathbf V}}
\def\tr{\mathop{\rm tr}}
\def\span{{\mathop{\mathrm{Span}}\nolimits}}
\def\bV{{\mathbf V}}
\def\diag{{\rm diag}}
\def\Span{\mathop{\mathrm{Span}}\nolimits}
\begin{document}
\openup 1 \jot

\title{(Real)linear preservers of multiples of unitaries and \\
matrix pairs with some extremal norm properties}

\author{Bojan Kuzma, Chi-Kwong Li, Edward Poon}
\date{}
\maketitle
\begin{abstract}  We determine the structure of linear maps on complex (real) square matrices
sending unitary (orthogonal) matrices to multiples of unitary (orthogonal) matrices. The result
is used to determine the linear preservers of
matrix pairs satisfying the extremal norm properties $\|AB\| = \|A\| \|B\|$,
$\|A^*B\| = \|A\| \|B\|$, or $\|AB^*\| = \|A\| \|B\|$,
for the spectral norm $\|\cdot\|$.

\end{abstract}

AMS Classification.  15A60; 15A18; 15A86; 15A04.

Keywords. Linear preserver, unitary matrix, orthogonal matrix, norm multiplicative pairs.

\section{Introduction}

Let $M_n(\IF)$ be the set of $n\times n$ matrices over $\IF$, where $\IF$ is the complex field  $\IC$ or the real field $\IR$.
    Denote by ${\mathcal U}_n(\IF)$ the set of $A \in M_n(\IF)$ such that $A^*A = I_n$. Here
$A^*$ is the conjugate transpose of $A$ if $\IF = \IC$ and $A^*$ is the transpose $A^T$ of $A$ if $\IF = \IR$.
We will show that, when $n \ge 3$, a bijective linear map sending matrices in ${\mathcal U}_n(\IF)$ to multiples of matrices in
${\mathcal U}_n(\IF)$ has the form
$$A \mapsto r UAV \quad \hbox{ or } \quad A \mapsto r UA^TV$$
for some nonzero $r \in \IF$ and $U, V \in {\mathcal U}_n(\IF)$.

Denote by $\|\cdot\|$ the spectral norm on $M_n(\IF)$ defined by $\|A\|  = \max\{ \|Ax\|: x \in \IC^n, \|x\| = 1\}$ (recall that $\|A\|$ is the maximal singular value of $A$).
Using the above result, we determine the structure of  \emph{bijective} linear maps $T\colon M_n(\IF)\rightarrow M_n(\IF)$
preserving matrix pairs
satisfying some extremal norm properties. For example, when $n \ge 3$
and  $T\colon  M_n(\IF)\rightarrow M_n(\IF)$ is a bijective linear map, we show that

\begin{itemize}
\item[(a)]  $\|T(A) T(B)\| = \|A\| \|B\|$ whenever  $A, B \in M_n(\IF)$ satisfy
$\|AB\| = \|A\| \|B\|$ if and only if $T$ has the form
$$A \mapsto r UAU^* $$
for some nonzero $r \in \IF$ and $U \in {\mathcal U}_n(\IF)$;
\item[(b)] $T$ has the form
$$A \mapsto r UAV  $$
for some nonzero $r \in \IF$ and $U, V \in {\mathcal U}_n(\IF)$ if and only if any of the following holds.

\smallskip
(b.1) $\|T(A)^*T(B)\| = \|T(A)\| \|T(B)\|$ whenever $\|A^*B\| = \|A\| \|B\|$,

\smallskip
(b.2)   $\|T(A)T(B)^*\| = \|T(A)\| \|T(B)\|$ whenever $\|AB^*\| = \|A\| \|B\|$.
\end{itemize}

\medskip
The case when $n = 2$ is also treated in a separate section since it requires different arguments and  the structure of the maps is more complicated.

People often consider real-linear maps on finite-dimensional complex normed spaces
because distance preserving maps
$f$, i.e., maps satisfying $\|x-y\| = \| f(x)-f(y)\|$ are always
invertible real-linear up to a translation (see, e.g.,~\cite[p.~500] {BhatiaSemrl1997} and \cite{Nica2012} for a very short proof).
In this paper, we will also  determine real-linear maps on complex matrices sending multiples of unitary matrices to
multiples of unitary matrices, or preserving extremal norm properties mentioned above. As we shall see
in Section 3, some proofs can be simplified
if we impose the complex linearity assumption.

Note that related studies on maps that leave the group ${\mathcal U}_n(\IF)$ invariant (rather than mapping it into
$\IF\,{\mathcal U}_n(\IF)$) were carried out in
\cite{Marcus1959, Wei1975}, and a unified approach was given in \cite{BottaPierce1977}.
In particular, \cite{Marcus1959, Wei1975} did not assume bijectivity. In our study, 
the bijectivity is imposed to rule out degenerate maps.   For example,
for any linear functional $f$ on $M_n(\IF)$ and any  $Z \in {\mathcal U}_n(\IF)$,
the singular map $T$ defined by $T(A) = f(A) Z$ will send any matrix  $A \in {\mathcal U}_n(\IF)$ to a multiple of
$Z \in {\mathcal U}_n(\IF)$,
and satisfy
$\|T(A)T(B)\| = \|T(A)\| \|T(B)||$, \ $\|T(A)^* T(B)\| = \|T(A)^*\| \|T(B)\|$,  and $\|T(A) T(B)^*\| = \|T(A)\| \|T(B)\|$
for all $A, B \in M_n(\IF)$. It would be nice to give complete descriptions of the preservers in our study without the 
bijective assumption.

In our discussion, we shall write $e_j$ for the column vector (i.e., an $n$--by--$1$ matrix)
whose only nonzero entry is a one in the $j$th entry, and set $E_{ij} = e_i e_j^*$.

\section{Real-linear preservers on  matrices of size at least 3}

\subsection{Real-linear preservers of multiples of unitary/orthogonal matrices}

The main theorem of this section is the following.

\begin{theorem}\label{thm:main1} Let $n\ge 3$ and let $T\colon M_n(\IF)\to M_n(\IF)$ be a real-linear map.
The following conditions are equivalent.
\begin{itemize}
\item[{\rm (i)}] $T$ is bijective and maps matrices in ${\mathcal U}_n(\IF)$ to multiples of
matrices in ${\mathcal U}_n(\IF)$.
\item[{\rm (ii)}]
There are $U,V\in {\mathcal U}_n(\IF)$ and $r>0$ such that $T$ takes one of the following four forms
      $$X \mapsto rUXV  \quad \hbox{ or } \quad X \mapsto rUX^TV  \quad \hbox{ or } \quad X \mapsto rU\overline{X}V \quad \hbox{ or } \quad X \mapsto rU\overline{X}^TV.$$
    \end{itemize}
      Moreover, if  $T\colon M_n(\IC)\to M_n(\IC)$ is complex-linear then the last two  forms do not exist.
\end{theorem}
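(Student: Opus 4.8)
The implication (ii) $\Rightarrow$ (i) is immediate: each of the four maps is clearly bijective, and since $U, V$ are unitary and transposition/conjugation preserve ${\mathcal U}_n(\IF)$, a matrix $A \in {\mathcal U}_n(\IF)$ is sent to $r$ times a unitary. So the content is (i) $\Rightarrow$ (ii), together with the final assertion about complex-linearity. Let me sketch how I would prove (i) $\Rightarrow$ (ii).

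The plan is to first normalize. Since $I \in {\mathcal U}_n(\IF)$, $T(I) = r_0 W$ for some $r_0 > 0$ and $W \in {\mathcal U}_n(\IF)$; replacing $T$ by $X \mapsto r_0^{-1} W^* T(X)$ we may assume $T(I) = I$. The key structural fact to extract is that for every $A \in {\mathcal U}_n(\IF)$, $T(A)$ is a scalar multiple of a unitary, i.e. $T(A)^* T(A) = \lambda(A) I$ for some $\lambda(A) > 0$. Applying this to $A = I$ gives the normalization, and applying it to unitaries near $I$ — for instance $A = \exp(itH)$ for Hermitian (symmetric) $H$, differentiating at $t=0$ — should force $T$ to send the tangent space at $I$ (the Hermitian/symmetric matrices, in the appropriate sense) into itself. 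More robustly, I would use that $U, V \in {\mathcal U}_n(\IF)$ with $UV^{-1}$ unitary means products of unitaries are unitary: from $T(U) = \mu_U R_U$ with $R_U$ unitary, one studies the map $U \mapsto R_U$ and shows it is a (real-linear-ish) homomorphism-like object on the group ${\mathcal U}_n(\IF)$, then invokes the classical description of such maps on the unitary group (the cited work of Marcus, Wei, Botta–Pierce). Concretely: the set of rank-one idempotents is not directly visible, but the reflections $I - 2P$ (with $P$ a rank-one orthogonal projection) are unitary, and $T$ applied to these, combined with the normalization $T(I)=I$, pins down $T(P)$ up to the ambiguities in (ii). One then checks $T$ is determined by its action on projections and that the only real-linear extensions are the four listed forms.

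An alternative, and probably cleaner, route: show directly that $\|T(X)\| = \|X\|$ for all $X$ (up to the global scalar $r$) by using the fact that the spectral norm ball has the unitaries among its extreme points and $T$ maps extreme points of the unit ball to extreme points — this uses that $T(A)$ is a multiple of a unitary precisely captures ``$T(A)$ is an isometry up to scalar''. Then $T$ is a scalar multiple of a linear isometry of $(M_n(\IF), \|\cdot\|)$, and the structure of such isometries is classical (Kadison / the surjective linear isometries of $M_n$ are $X \mapsto UXV$ or $X \mapsto UX^TV$ in the complex case, with the extra conjugation forms appearing exactly when only real-linearity is assumed). The main obstacle here is justifying that preserving ``multiples of unitaries'' upgrades to ``scalar multiple of an isometry'': one needs that the map $A \mapsto \lambda(A)$ is constant on the unit sphere of unitaries, which requires bijectivity (to rule out the degenerate examples in the introduction) and the hypothesis $n \ge 3$ (to have enough unitaries — e.g. to connect $I$ to $-I$ through reflections and use rank considerations that fail for $n=2$).

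I expect the hardest step to be this rigidity of the scalar: proving that $\lambda(A)$ does not depend on $A \in {\mathcal U}_n(\IF)$. The clean way is: for $U, V \in {\mathcal U}_n(\IF)$ we have $UV \in {\mathcal U}_n(\IF)$, so $T(U), T(V), T(UV)$ are all multiples of unitaries; but $T(UV) \ne T(U)T(V)$ in general, so one cannot argue multiplicatively. Instead one argues additively on sums of unitaries: every matrix with $\|X\| \le 1$ is an average of (two, for $n\ge 2$) unitaries, and comparing norms of $T$ on such convex combinations, using that each unitary summand goes to a fixed multiple of a unitary, forces the multiple to be constant — here the bijectivity ensures the image is large enough and $n \ge 3$ enters in the combinatorics of which sums of reflections recover which matrices. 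Once $\|T(\cdot)\| = r\|\cdot\|$ is established, the final sentence of the theorem follows because the complex-linear surjective isometries of $M_n(\IC)$ are exactly $X \mapsto UXV$ and $X \mapsto UX^TV$ (no conjugation, since conjugation is only real-linear), whereas dropping complex-linearity to real-linearity admits the two conjugate forms as well.
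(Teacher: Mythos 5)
Your direction (ii) $\Rightarrow$ (i) is fine, but the sketch of (i) $\Rightarrow$ (ii) has a genuine gap at exactly the point you flag as the hardest step, and no actual argument is supplied there. Writing $T(A)^*T(A)=\lambda(A)I$ for $A\in{\mathcal U}_n(\IF)$, you propose to show $\lambda$ is constant and then quote the classification of surjective isometries. But the extreme-point route is circular: the hypothesis only gives that extreme points go to \emph{multiples} of extreme points with an a priori varying multiple, which is precisely what you would need to rule out. The ``average of two unitaries'' route also does not close: if $X=(U+V)/2$ then $T(X)=(\lambda(U)^{1/2}R_U+\lambda(V)^{1/2}R_V)/2$, and nothing in the hypothesis constrains the norm of such a sum, so no contradiction with unequal scalars emerges; moreover, in the real case it is false that every contraction is the mean of two orthogonal matrices (e.g.\ $\diag(1,1/2)$ in $M_2(\IR)$ is not, since $\tfrac12(Q_1+Q_2)=\tfrac12 Q_1(I+Q_1^TQ_2)$ forces the singular values to pair up). Your first route fares no better: the results of Marcus, Wei and Botta--Pierce concern linear maps with $T({\mathcal U}_n)\subseteq{\mathcal U}_n$, and the assignment $U\mapsto R_U$ extracted from $T(U)=\mu_U R_U$ is neither linear nor canonically defined, so those theorems do not apply. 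Likewise, differentiating $T(e^{itH})^*T(e^{itH})=\lambda(t)I$ at $t=0$ only yields $T(i\IH_n)\subseteq \IA_n(\IF)+\IR I$, a containment of an $n^2$-dimensional space in an $(n^2+1)$-dimensional one, which is not yet a contradiction.

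The paper's proof does something you have no counterpart for: it first proves (Lemma~\ref{lem:rk-1-reduction}) that $T$ preserves rank-one matrices, by assuming $T(E_{11})=D_1\oplus 0_{n-k}$ has rank $k$, exploiting the full quadratic relations $(D\pm Y)(D\pm Y)^*\in\IR_{\ge0}I$ and $(D\pm Y)^*(D\pm Y)\in\IR_{\ge0}I$ for $Y=T([0]\oplus X_1)$ with $X_1$ unitary, and counting real dimensions to force $k=1$; the case $n=3$ with $D$ invertible needs a separate argument invoking Adams' bound $\rho(6)=2$ on spaces of invertible real matrices. Only after rank-one preservation is established does one get the forms $A\mapsto MA^\sigma N^*$ or $A\mapsto M(A^\sigma)^TN^*$ from the known structure of rank-one preservers, and the unitarity of $M,N$ (your scalar rigidity) then drops out by testing $T$ on the single unitary $A=YS^*$ built from the SVDs of $M$ and $N$. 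So the rigidity you identify as the crux is in fact a easy consequence of rank-one preservation, and it is the rank-one reduction -- entirely absent from your proposal -- that carries the weight of the theorem.
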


The bulk of the proof is a reduction to rank-one preservers. We do it in a separate lemma. By $\IH_n$ we denote the real-linear space of hermitian matrices.

\begin{lemma}\label{lem:rk-1-reduction}
Let $n\ge 3$. A bijective real-linear map $T\colon M_n(\IF)\to M_n(\IF)$ which  maps ${\mathcal U}_n(\IF)$ to $\IF\,
{\mathcal U}_n(\IF)$ preserves the set of rank one matrices.
\end{lemma}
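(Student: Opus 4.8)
The plan is to prove that $T$ maps the rank-one matrices into themselves; as $T$ is a linear homeomorphism and the nonzero rank-one matrices form a connected manifold, $T$ then maps this set onto itself. First observe that $T$ preserves the cone $G:=\IF\,{\mathcal U}_n(\IF)$: every element of $G$ is a nonnegative real multiple of a matrix in ${\mathcal U}_n(\IF)$, and $T$, being real-linear, commutes with multiplication by nonnegative reals, so $T(G)\subseteq G$. Writing $T(I)=cU$ with $c\in\IF\setminus\{0\}$ and $U\in{\mathcal U}_n(\IF)$ and replacing $T$ by $X\mapsto c^{-1}U^*T(X)$ --- which changes neither the hypothesis nor the conclusion --- we may assume $T(I)=I$; composing further with conjugations $X\mapsto W^*T(WXW^*)W$ is likewise harmless and will be used to put subspaces into standard position.

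The engine is the observation that for every orthogonal projection $P$ and every scalar $\lambda\in\IF$ of modulus $1$ the matrix $I+(\lambda-1)P$ lies in ${\mathcal U}_n(\IF)$, hence is sent by $T$ into $G$. Taking $P=zz^*$ for a unit vector $z$ and $\lambda=\mathrm e^{\mathrm i\theta}$, this image equals $I+(\cos\theta-1)\,T(zz^*)+\sin\theta\,T(\mathrm i\,zz^*)$ when $\IF=\IC$ (one cannot pull $\lambda-1$ through the merely real-linear $T$, which is precisely where assuming complex-linearity would shorten the argument, cf.\ the Introduction) and $I$ or $I-2zz^*$ when $\IF=\IR$. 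Expanding the relation $(\,\cdot\,)^*(\,\cdot\,)\in\IF I$ and differentiating it repeatedly in $\theta$ yields a short list of algebraic identities which force $T(zz^*)$ (together with $T(\mathrm i\,zz^*)$ in the complex case) into the shape $\gamma_z I+\beta_z Q_z$ with $\gamma_z\in\IF$, $\beta_z\in\IR$ and $Q_z$ an orthogonal projection; that is, $T(zz^*)$ is normal with a single nontrivial spectral projection after subtracting a scalar matrix. When $\IF=\IR$ the ``$\lambda=\pm1$'' version of this is too weak by itself, and one supplements it with the images of the $2$-plane rotation matrices $I+(\cos\theta-1)(zz^*+ww^*)+\sin\theta(zw^*-wz^*)$ (for orthonormal $z,w$) to reach the same conclusion, which also gives control of $T$ on the skew rank-one directions.

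It remains to see that $\rk Q_z\in\{1,n-1\}$ and $\gamma_z=0$. For this I would let $P$ run through the rank-one projections and their pairwise-orthogonal sums and exploit that $T$ is additive on orthogonal decompositions of $I$: this yields relations such as $\beta_{z_1}Q_{z_1}+\beta_{z_2}Q_{z_2}=(\text{scalar})\,I+\beta_P Q_P$ for $z_1\perp z_2$ and $P=z_1z_1^*+z_2z_2^*$, together with $\sum_{j=1}^n \beta_{e_j}Q_{e_j}=(\text{scalar})\,I$. A weighted sum of two, or of $n$, orthogonal projections can have at most two distinct eigenvalues only for very rigidly related projections; weighing this against the injectivity of $T$ and using $n\ge3$ forces $\rk Q_z\in\{1,n-1\}$ for every $z$, so $T(zz^*)$ is a scalar matrix plus a rank-one matrix, after which comparing the images of several rank-one projections kills the scalar part. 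Thus $T$ sends every rank-one Hermitian (symmetric) matrix to a rank-one matrix; and since an arbitrary rank-one matrix is a scalar multiple of $U\,zz^*$ for some $U\in{\mathcal U}_n(\IF)$ and unit vector $z$, running the same analysis with $I$ replaced by $T(U)\in G$ (and the unitaries $U\bigl(I+(\lambda-1)P\bigr)$ in place of $I+(\lambda-1)P$) shows that $T(U\,zz^*)$ is rank one as well.

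The step I expect to be the real obstacle is the one just sketched: turning ``$T(zz^*)$ has at most two eigenvalues'' into ``$T(zz^*)$ is rank one'' and then removing the residual scalar, which is where the rigidity of families of projections, the injectivity of $T$, and the hypothesis $n\ge3$ must all be used together --- and indeed the conclusion fails when $n=2$, which is exactly why that case is treated separately. Two lesser nuisances are that, $T$ being only real-linear, one has to carry along the possibility that $T$ is intertwined with a transpose or an entrywise conjugation, and that over $\IR$ the clean unit-circle version of the engine is unavailable, so one must work instead with the $2$-dimensional rotation subgroups.
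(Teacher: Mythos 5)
Your opening reductions are fine, and the ``engine'' is sound: $I+(\lambda-1)zz^*$ is indeed unitary for unimodular $\lambda$, and expanding $M(\theta)^*M(\theta)\in\IR_{\ge0}\,I$ for $M(\theta)=I+(\cos\theta-1)S+\sin\theta\,K$ (with $S=T(zz^*)$, $K=T(\mathrm i zz^*)$) into Fourier coefficients does yield usable identities. But the first concrete conclusion you draw from them is not what they give. The coefficient of $\cos\theta$, combined with its counterpart from $M(\theta)M(\theta)^*$, forces $S$ to be normal with $S+S^*-2S^*S\in\IR I$, i.e.\ the eigenvalues of $S$ lie on a circle centred at $1/2$; equivalently $S-\tfrac12 I$ is a nonnegative multiple of a unitary. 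For $n\ge3$ this class is strictly larger than $\{\gamma I+\beta Q\}$ with $Q$ a projection and $\beta\in\IR$: a unitary may have $n$ distinct eigenvalues, and even a two-eigenvalue normal matrix has your form only when the two eigenvalues differ by a \emph{real} number. So the intermediate claim ``$T(zz^*)=\gamma_z I+\beta_z Q_z$'' already requires the relations involving $K$ plus a nontrivial argument you do not supply. More seriously, the passage from there to ``$Q_z$ has rank one and $\gamma_z=0$'' --- which you yourself flag as ``the real obstacle'' --- is exactly where the content of the lemma lies, and it is only gestured at (``very rigidly related projections'', ``comparing the images of several rank-one projections kills the scalar part''). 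As written, this is a plan rather than a proof.

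For comparison, the paper sidesteps this spectral analysis entirely. It supposes $T(E_{11})=D=D_1\oplus 0_{n-k}$ has rank $k$, notes that $E_{11}\pm([0]\oplus X_1)$ is unitary for every $X_1\in{\mathcal U}_{n-1}(\IF)$, extracts from $(D\pm Y)^*(D\pm Y)\in\IR I$ the linear constraints $DY+Y^*D\in\IR I$ and $DY^*+YD\in\IR I$ on $Y=T([0]\oplus X_1)$, and then compares the real dimension of $0\oplus M_{n-1}(\IF)$ (which the bijection $T$ must embed) with the dimension of the constrained target space; this forces $k=1$ except for certain $n=3$ configurations, which are excluded by separate arguments (including Adams' bound on real-linear spaces of invertible matrices). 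If you wish to salvage your route, you would in effect be redoing the Marcus/Botta--Pierce analysis of unitary-group preservers from scratch; the dimension count is the shortcut that makes the lemma tractable, and your write-up needs the missing rigidity arguments made precise before it can stand.
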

\begin{proof}
  Suppose $A$ is rank one with norm one such  that $T(A) = B$ has rank~$k$. We may replace
$T$ by $V_1 T(U_1 X U_2) V_2$ for suitable $U_1, U_2, V_1, V_2 \in {\mathcal U}_n(\IF)$
and assume that $T(E_{11}) =D = D_1 \oplus 0_{n-k}$, where
$D_1 = \diag(d_1, \dots, d_k)$
with $d_1\ge\dots\ge d_k > 0$.  Let $X = [0] \oplus X_1$ with $X_1 \in U_{n-1}(\IC)$, and $Y = T(X)
= \begin{pmatrix} Y_{11} & Y_{12} \cr Y_{21} & Y_{22} \cr\end{pmatrix}$ with $Y_{11} \in M_k(\IC)$.
Then $T(E_{11}) +\mu T(X) = D +\mu Y$ are multiples of unitary (orthogonal if $\IF=\IR$) matrices for every  $\mu\in\{-1,1\}$.
In particular, $(D +  Y^\ast)(D +  Y) = \alpha_1 I$
and $(D -  Y^\ast)(D -  Y) = \alpha_2 I$. Notice that $(D \pm Y^\ast)(D \pm  Y)$ are positive semidefinite, so $\alpha_1,\alpha_2\ge 0$.
By subtracting the two equations we get  $DY+Y^\ast D =\beta I$ where $\beta= (\alpha_1-\alpha_2)/2 I\in\IR$. Due to $D=D_1\oplus 0_{n-k}$ we easily compute
\begin{equation}\label{eq:DY}
DY+Y^{\ast}D =
\begin{pmatrix} D_1 Y_{11} + Y_{11}^* D_1 & D_1 Y_{12} \cr
Y_{12}^* D_1 & 0_{n-k}\cr \end{pmatrix} = \beta I.
\end{equation}
Note that we also have
$(D+Y)(D+Y^*) = \alpha_1 I$ and $(D-Y)(D-Y^*) = \alpha_2 I$, which gives
\begin{equation}\label{eq:DY*}
  DY^*+YD = \begin{pmatrix} D_1 Y_{11}^* + Y_{11} D_1 & D_1 Y_{21}^* \cr
Y_{21} D_1 & 0_{n-k}\cr \end{pmatrix} = \beta I.
\end{equation}
If $k = n$, then, by~\eqref{eq:DY}, $Y_{11} = Y$ satisfies  $DY+Y^\ast D =\beta I$, or equivalently,  $(DY-\beta/2 I)+(DY-\beta/2 I)^\ast=0$. Since, in this case, $D$ is invertible, we see that the set of all such $Y$  equals  $D^{-1}\bigl(\IA_n(\IF)+\IR I\bigr)$, where  $\IA_n(\IF)$ is the space of skew-hermitian (skew-symmetric if $\IF=\IR$) matrices.
If $\IF=\IC$ we can find  $2(n-1)^2$ (respectively, $(n-1)^2$ if $\IF=\IR$) real-linearly independent matrices $X_1 \in U_{n-1}(\IF)$
-
-
 so that $T([0]\oplus X_1)= Y$
satisfies $DY+Y^*D \in \IR I$.
So, $T$ maps the real-linear space
$${\mathbf V} = \span_\IR \{ [0] \oplus X: X \in U_{n-1}(\IF)\}=0\oplus M_{n-1}(\IF)$$ into a space of dimension bounded by $n^2 + 1$,
i.e., $2(n-1)^2 \le n^2+1$ (respectively, into a space of dimension bounded by $n(n-1)/2 + 1$,
i.e., $(n-1)^2 \le n(n-1)/2+1$, if $\IF=\IR$), which is impossible unless $n \le 3$.

If $k < n$, then $\beta = 0$, and~\eqref{eq:DY}--\eqref{eq:DY*} imply  $D_1Y_{11} + Y_{11}^*D_1 = D_1 Y_{11}^* + Y_{11} D_1 = 0_k$
and $Y_{12} = Y_{21}^* = 0_{k,n-k}$.
Then $T$ will map $\bV$ into a real-linear subspace $\tilde \bV$ consisting of matrices of the form
$Y_{11} \oplus Y_{22}$ with $D_1Y_{11} + Y_{11}^* D_1 = 0_k$.
Now, $\dim_{\IR}\tilde \bV=k^2 + 2(n-k)^2$  (respectively, $\dim_{\IR}\tilde \bV=k(k-1)/2 + (n-k)^2$ if $\IF=\IR$).
So, if $\IF=\IC$ we have $2(n-1)^2 \le k^2+2(n-k)^2 $
(respectively, if $\IF=\IR$ we have $(n-1)^2 \le  k(k-1)/2+(n-k)^2 $), which is impossible unless $k = 1$.

As a result, except when $n = 3$ and $T(E_{11}) = D$ is invertible,
we see that  $T$ sends rank one matrices to rank one matrices.
If $n=3$ and $D$ is invertible and $\IF=\IC$, we must add one more equation to reach a contradiction:
\begin{equation}\label{eq:additional}
 (YD-\beta/2 I)+(YD-\beta /2 I)^\ast=0\hbox{ or equivalently } Y\in \bigl(\IA_3(\IF)+\IR I\bigr)D^{-1},
\end{equation}
which is obtained from~\eqref{eq:DY*}. Following the above arguments we see that $T$ maps a space $0\oplus M_{2}(\IC)$ whose real dimension is $8$ into a space
\begin{equation}\label{eq:intersection}
D^{-1}\bigl(\IA_3(\IF)+\IR I\bigr)\cap \bigl(\IA_3(\IF)+\IR I\bigr)D^{-1}.
\end{equation}
This intersection consists of matrices $X$ with $X=D^{-1}(i H+\lambda I) =(iK+\mu I)D^{-1}$, or equivalently,
$$iK+\mu I=D^{-1}(iH+\lambda I)D$$
for some skew-hermitian $iH,iK$ and real $
\lambda,\mu$.  Since skew-hermitian matrices have purely imaginary  eigenvalues, and the spectrum does not change under conjugation by $D$, we see that $\lambda=\mu$, and hence $K=D^{-1}HD$. Since $K,H,D$ are all hermitian, this forces $D^{-1}HD=(D^{-1}HD)^\ast= DHD^{-1}$, or equivalently, $H$ commutes with $D^2$. Recall that $D$ is diagonal and positive-definite with monotonically decreasing positive eigenvalues. If it is not a scalar matrix, then $D^2$ has at least two distinct eigenvalues,  in which case the real-linear subspace of such skew-hermitian $iH$ is contained in $i\IH_2\oplus i\IR$ (or in $i\IR\oplus i\IH_2$) with dimension $5$. In this case, the dimension of~\eqref{eq:intersection} is bounded above by $5+1=6$ which contradicts the fact that it contains the image of $0\oplus M_2(\IC)$ under a real-linear bijection $T$.

The only possibility left to consider is that $D=\mu_0 I\in\IR I$. Recall that $D$ was obtained by first modifying $T$ into $X\mapsto V_1T(U_1XU_2)V_2$ and then evaluating at $X=E_{11}=U_1^\ast A U_2^\ast$. Hence, the arguments so far show that $T$ maps rank-one either into rank-one or into a scalar multiple of some unitary matrix. Let $e_1,\dots,e_n$ be a standard basis of $\IF^n$.
Now, the first column $\IC^3 e_1^\ast$ is a real-linear subspace of dimension six which, besides $0$, consists only of rank-one matrices. A real-linear bijection $T$ maps it into a real-linear subspace of $M_3(\IC)\subseteq M_6(\IR)$. By Adam's fundamental result on dimensions of spaces of invertible matrices (\cite{Adams1962},  \cite{AdamsLaxPhillips1965}), in $M_6(\IR)$ (hence also in $M_3(\IC)$), the maximal possible dimension of a real-linear space which, besides $0$, consists of invertible matrices only, is $\rho(6)=2$, where $\rho$ denotes the Radon-Hurwitz function (c.f. also \cite{AdamsLaxPhillips1965} for a more general result valid for any size). Thus, there exists a nonzero vector $x\in\IC^3$ such that $T(xe_1^\ast)=uv^\ast$ is of rank-one. But then rank-one $E_{11}+\lambda xe_1^\ast=(e_1+\lambda x)e_1^\ast$ ($\lambda\in\IR$)  is mapped by (modified)  $T$ into
$$\mu_0 I + \lambda uv^\ast,$$
 which  is never of rank-one and which  also fails to be  a scalar multiple of a unitary  for  at least one real $\lambda$, yielding the desired contradiction. Indeed, if $\IF=\IC$, then $T$ leaves the set of rank-one matrices invariant.

If $n=3$ and $D$ is invertible and $\IF=\IR$
 we again  add~\eqref{eq:additional}.
Following the above arguments we see that $T$ maps a $4$-dimensional  space $0\oplus M_{2}(\IR)$ into a space
$D^{-1}\bigl(\IA_3(\IR)+\IR I\bigr)\cap \bigl(\IA_3(\IR)+\IR I\bigr)D^{-1}$. Since
$\dim D^{-1}\bigl(\IA_3(\IR)+\IR I\bigr)=4$ and $T$ is bijective we conclude that
$$D^{-1}\bigl(\IA_3(\IR)+\IR I\bigr)=\bigl(\IA_3(\IR)+\IR I\bigr)D^{-1}.$$
It follows that $D$ must be a scalar matrix. To see this, note that for every
$A\in \IA_3(\IR)$ there is $B\in\IA_3(\IR)$ and a scalar $\delta$ such that
$D^{-1}A=(B+\delta I)D^{-1}$. Multiplying by $D$ and
taking the trace gives $\delta=0$, and hence $B=D^{-1}AD\in \IA_3(\IR)$. By arbitrariness of $A\in \IA_3(\IR)$ and as $D$ is positive definite, it must indeed be a scalar matrix.

 In this case, $D=\gamma I$ and
 $T(0\oplus X)=Y\in D^{-1}\bigl(\IA_3(\IR)+\IR I\bigr) = \IA_3(\IR)+\IR I$ for every
 orthogonal $X\in U_{2}(\IR)$ and, moreover, $(D+ Y)(D+Y^T) =\alpha_1 I$ reduces to $YY^T\in\IR I$. Decompose
 $Y=\beta I +A$ for some skew-symmetric $A$ to see $YY^T=\beta^2 I -A^2=\beta^2 I +AA^T\in\IR I$, and since
 $3$-by-$3$ skew-symmetric matrices are never invertible we see that $A=0$.  This implies the bijection $T$ maps a
 basis of $0\oplus M_2(\IR)$ composed of matrices $0\oplus X$ with $X$
 orthogonal to $Y= T(0\oplus X)\in\IR I$, a
 contradiction.
\end{proof}

\begin{proof}[Proof of Theorem~\ref{thm:main1}]
The sufficiency is clear. To prove the necessity, Lemma~\ref{lem:rk-1-reduction} implies that~$T$ sends rank one matrices to rank one matrices.
The bijective preservers of rank-one  operators are known even if they are merely additive; on rank-one matrices they take one of the two forms
  (a) $xy^\ast\mapsto (M_\sigma x) (N_\sigma y^\ast)$ or (b) $xy^\ast\mapsto(M_\sigma y)(N_\sigma x^\ast)$  for some $\sigma$-linear bijections $M_{\sigma}\colon M_{n\times 1}(\IC)=\IC^n\to\IC^n$ and $N_{\sigma}:M_{1\times n}(\IC)\to M_{1\times n}(\IC)$, where $\sigma\colon\IC\to\IC$ is a field isomorphism; see e.g., Theorem 2.1 of \cite{Kuzma2002}. In our case, $T$ being real-linear implies $\sigma$ must fix every real number, and since it is a homomorphism of $\IC$, it is either the identity or complex conjugation. Define a linear bijection $M\colon\IC^n\to\IC^n$ on a standard basis by $Me_i:=M_{\sigma} e_i$ and likewise for $N$; then the forms (a)--(b) for $T$ read
    $$(a)\ A \mapsto MA^\sigma N^\ast\quad\hbox{or}\quad (b)\ A\mapsto M(A^\sigma)^T N^\ast$$ where $A^\sigma:=\bigl(\sigma(a_{ij})\bigr)_{ij}$ is obtained from $A$ by applying  $\sigma$ entrywise.

If $\sigma$ is complex conjugation  we replace $T$ by the map $X\mapsto T(\overline{X})$; the new $T$ will be linear, bijective, and will still leave the set $\IC\, {\mathcal U}_n(\IC)$ invariant.

It remains to show  $M,N$ are multiples of unitaries.  We may write
$M = X\, \diag(r_1, \dots, r_n)Y^*$ and $N = R\, \diag(s_1, \dots, s_n) S^*$ for some
$X, Y, R, S \in {\mathcal U}_n(\IF)$, $r_1 \ge \cdots \ge r_n > 0$ and
$s_1 \ge \cdots \ge s_n > 0$. Let $A = Y S^*$. Then $T(A)$ or $T(A^T)$ are multiples of unitaries, from which we deduce that $r_1s_1=r_ns_n$.  Thus $r_1 = r_n$ and $s_1= s_n$, so $M/r_1, N/s_1 \in {\mathcal U}_n(\IF)$ as desired.
\end{proof}

\begin{corollary}
 Let $n\ge 3$. A real-linear bijection $T\colon M_n(\IF)\to M_n(\IF)$ leaves the set $\IF\,{\mathcal U}_n(\IF)$ invariant if and only if there exists $r>0$  and a unitary $U$ such that $X\mapsto rUT(X)$ is an isomorphism or an anti-isomorphism of ${\mathcal U}_n(\IF)$.
\end{corollary}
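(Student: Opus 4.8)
The plan is to read the corollary off from Theorem~\ref{thm:main1} together with one elementary bookkeeping step. First I would record that $\IF\,{\mathcal U}_n(\IF)=\IR_{\ge 0}\,{\mathcal U}_n(\IF)$, since the phase of a complex scalar can be absorbed into the unitary factor; consequently a real-linear map $T$ sends ${\mathcal U}_n(\IF)$ into $\IF\,{\mathcal U}_n(\IF)$ if and only if it sends $\IF\,{\mathcal U}_n(\IF)$ into $\IF\,{\mathcal U}_n(\IF)$. Thus, for a real-linear bijection $T$, ``leaving $\IF\,{\mathcal U}_n(\IF)$ invariant'' amounts exactly to hypothesis~(i) of Theorem~\ref{thm:main1} (the set equality being automatic once the normal forms below are in hand, each of which is visibly a bijection of $\IF\,{\mathcal U}_n(\IF)$). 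I would also note that for any $r>0$ and $U\in{\mathcal U}_n(\IF)$ the map $X\mapsto rUX$ carries $\IF\,{\mathcal U}_n(\IF)$ bijectively onto itself; hence $T$ leaves $\IF\,{\mathcal U}_n(\IF)$ invariant precisely when $S:=rU\,T(\,\cdot\,)$ does, and if $S$ restricts to a bijection of ${\mathcal U}_n(\IF)$ onto ${\mathcal U}_n(\IF)$ then, by $\IR_{\ge 0}$-homogeneity, $S$ (and hence $T$) leaves $\IF\,{\mathcal U}_n(\IF)$ invariant. This observation already settles the ``if'' direction of the corollary.

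For the ``only if'' direction I would apply Theorem~\ref{thm:main1} to get that $T$ has one of the four forms $X\mapsto r_0U_0XV_0,\ X\mapsto r_0U_0X^TV_0,\ X\mapsto r_0U_0\overline{X}V_0,\ X\mapsto r_0U_0\overline{X}^TV_0$ with $r_0>0$ and $U_0,V_0\in{\mathcal U}_n(\IF)$. In each case I would choose the normalizing pair $r:=1/r_0>0$ and $U:=V_0^*U_0^*\in{\mathcal U}_n(\IF)$; then $S:=rU\,T(\,\cdot\,)$ becomes, respectively, $X\mapsto V_0^*XV_0,\ X\mapsto V_0^*X^TV_0,\ X\mapsto V_0^*\overline{X}V_0,\ X\mapsto V_0^*X^*V_0$. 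Since transpose, entrywise conjugation, and conjugate transpose all preserve unitarity, each of these restricts to a bijection of ${\mathcal U}_n(\IF)$ onto itself; a one-line check using $(XY)^T=Y^TX^T$ and $(XY)^*=Y^*X^*$ shows that the first and third are group automorphisms of ${\mathcal U}_n(\IF)$ and the second and fourth are anti-automorphisms. That is exactly the asserted conclusion; when $\IF=\IR$ the third and fourth forms coincide with the first and second, so the real case again yields only an isomorphism or an anti-isomorphism.

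The whole argument is essentially unwinding definitions once Theorem~\ref{thm:main1} is available. The only point demanding a little care is the explicit normalization: one must pick $(r,U)$ so that $S$ is unital and sends ${\mathcal U}_n(\IF)$ onto ${\mathcal U}_n(\IF)$, and then verify that $S$ is genuinely multiplicative or anti-multiplicative on ${\mathcal U}_n(\IF)$ rather than merely bijective. Both follow immediately from the multiplicative behaviour of the transpose and conjugate transpose, so I do not anticipate any genuine obstacle.
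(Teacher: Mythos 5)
Your proposal is correct: the paper states this corollary without proof, and the intended argument is exactly the one you give, namely reading the four normal forms off Theorem~\ref{thm:main1}, normalizing by $r=1/r_0$ and $U=V_0^*U_0^*$ to get $X\mapsto V_0^*XV_0$, $V_0^*X^TV_0$, $V_0^*\overline{X}V_0$, or $V_0^*X^*V_0$, and checking these are (anti-)automorphisms of ${\mathcal U}_n(\IF)$. Your preliminary observation that $\IF\,{\mathcal U}_n(\IF)=\IR_{\ge 0}\,{\mathcal U}_n(\IF)$, which reconciles the corollary's hypothesis with condition (i) of the theorem, is the one nontrivial bookkeeping point and you handle it correctly.
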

\subsection{Matrix pairs with product attaining maximum norm value}

For notational simplicity, we shall occasionally  use
${\mathcal U}_n$ to represent ${\mathcal U}_n(\IF)$.
It is known that the spectral norm $\|\cdot\|$ is submultiplicative, i.e.,
$\|AB\| \le \|A\| \|B\|$ for any $A, B \in M_n(\IF)$.
We have the following result characterizing linear maps preserving matrix pairs attaining the equality.

\begin{theorem}\label{main2} Let $n \geq 3$.
A bijective real-linear map $T \colon M_n(\IF) \to M_n(\IF)$ satisfies
\[
\|T(A)T(B)\| = \|T(A)\| \, \|T(B)\|
\quad \text{whenever} \quad
\|AB\| = \|A\| \, \|B\|
\]
if and only if $T$ has the form $A \mapsto \gamma U A U^*$  or the form $A \mapsto \gamma U \overline{A} U^*$  for some nonzero $\gamma \in \IF$ and $U \in {\mathcal U}_n(\IF)$.
\end{theorem}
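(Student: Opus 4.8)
The plan is to use Theorem~\ref{thm:main1}. The sufficiency direction is a routine check: if $T(A) = \gamma UAU^*$ (or $\gamma U\overline A U^*$), then $T$ scales the norm of every matrix by $|\gamma|$ and is multiplicative up to the scalar $\gamma$, so $\|T(A)T(B)\| = |\gamma|^2\|AB\|$ and $\|T(A)\|\,\|T(B)\| = |\gamma|^2\|A\|\,\|B\|$, and the two sides agree exactly when $\|AB\| = \|A\|\,\|B\|$. (Conjugation $A\mapsto\overline A$ is an isometry for $\|\cdot\|$ and preserves products, so it causes no trouble.) For the necessity direction, the first key step is to show that the hypothesis forces $T$ to map $\IF\,{\mathcal U}_n$ into itself, so that Theorem~\ref{thm:main1} applies. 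The natural idea: for $A\in{\mathcal U}_n$ we have $\|A A^*\| = 1 = \|A\|\,\|A^*\|$, but $A^*$ is not directly in the picture since $T$ need not interact with $*$; instead one uses that unitaries are exactly the norm-one matrices $A$ for which $\|AB\| = \|B\|$ for \emph{all} $B$ (equivalently, left multiplication by $A$ is an isometry). So for $U\in{\mathcal U}_n$ and arbitrary $B$ with $\|B\|=1$, the pair $(U,B)$ satisfies $\|UB\| = 1 = \|U\|\,\|B\|$ provided we can realize every norm-one $B$; more carefully, for any $B$ the pair $(U, \|B\|^{-1}B)$... — the clean statement is: $A\in \IF\,{\mathcal U}_n\setminus\{0\}$ iff $\|AB\| = \|A\|\,\|B\|$ for all $B$. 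So the hypothesis on $T$ gives: if $\|AB\|=\|A\|\,\|B\|$ for all $B$, then (taking $T^{-1}(B')$ in place of $B$, using bijectivity) $\|T(A)B'\|=\|T(A)\|\,\|T^{-1}(B')\|\cdot(\dots)$ — one needs to be a little careful, but the upshot is that $T(A)$ again multiplies norms for all $B'$, hence $T(A)\in\IF\,{\mathcal U}_n$. By Theorem~\ref{thm:main1}, $T$ has one of the four forms $X\mapsto rUXV$, $rUX^TV$, $rU\overline X V$, $rU\overline X^T V$.

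The second key step is to eliminate the two transpose forms and to force $V = U^*$ (up to the scalar already absorbed). Suppose first $T(X) = rUX^TV$. Compose with $X\mapsto (1/r)U^*(\cdot)V^*$ (which preserves the norm-multiplicativity hypothesis since it is a norm-scaling isomorphism) to reduce to $T(X) = X^T$; but transpose does \emph{not} preserve the norm-multiplicative relation: pick $A = E_{12}$, $B = E_{12}$, so $AB = 0$ and $\|AB\| = 0 = \|A\|\,\|B\|$ — that's the wrong direction. Instead take $A = E_{11} + E_{12}$-type or, more simply, $A = e_1 e_2^*$ and $B = e_2 e_3^*$ with $n\ge 3$: then $\|AB\| = \|e_1e_3^*\| = 1 = \|A\|\,\|B\|$, while $A^T B^T = e_2 e_1^* \cdot e_3 e_2^* = 0$, so $\|T(A)T(B)\| = 0 \ne 1 = \|T(A)\|\,\|T(B)\|$, a contradiction. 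The same example rules out $X\mapsto \overline X^T$. So $T$ has the form $X\mapsto rUXV$ or $X\mapsto rU\overline X V$. In the first case, reduce (by composing with a norm-scaling isomorphism, which we must check preserves the hypothesis — it does, since $X\mapsto cWXW'$ scales all norms by $|c|$ and is multiplicative up to scalar when... actually $WXW'\cdot WYW' $ need not simplify, so here one only pre-composes, i.e. replaces $T$ by $X\mapsto r^{-1}U^*T(X)$, keeping $V$) to $T(X) = XV$ with $V\in{\mathcal U}_n$. Now apply the hypothesis to $A = e_1e_1^*$, $B=e_1e_1^*$: $AB = A$, fine. Apply instead to $A = e_1 e_1^*$ and $B = e_1 e_2^*$: $AB = e_1e_2^*$, $\|AB\|=1=\|A\|\,\|B\|$, while $T(A)T(B) = (e_1e_1^*V)(e_1e_2^*V) = (e_1^*V e_1)\, e_1 e_2^* V$, whose norm is $|e_1^*Ve_1|$; we need this to equal $\|T(A)\|\,\|T(B)\| = 1$, forcing $|V_{11}| = 1$, hence (as $V$ is unitary) $V = V_{11}\oplus V'$; running this over all indices forces $V$ diagonal with unimodular entries, i.e. $V\in\IF\,{\mathcal U}_n$ is diagonal unitary. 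Then a further test, e.g. $A = e_1(e_1+e_2)^*$ and $B = (e_1+e_2)e_1^*$ with $\|AB\| = \|e_1\cdot 2 \cdot e_1^*\| = 2 = \|A\|\,\|B\|$, forces the diagonal entries of $V$ to be equal, so $V$ is a scalar multiple of $I$, and absorbing it into $r$ (and writing $U$ for $rU$ up to the scalar) gives $T(X) = UX\cdot(\lambda I) = \lambda UX$; but we also need to involve the \emph{right} side to get $U^*$.

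I realize the cleanest organization is: after reducing to $T(X) = rUXV$, consider the hypothesis applied with $B$ ranging, to pin down $V$ relative to $U$. Concretely, $\|AB\| = \|A\|\,\|B\|$ holds iff there is a unit vector $x$ with $\|Bx\| = \|B\|$ and $\|A(Bx/\|B\|)\| = \|A\|$, i.e. the top right-singular direction of $B$ meets the top left-singular cone of $A$. The condition that $T$ preserves this for \emph{all} such pairs, combined with bijectivity, is rigid; the honest route is to first establish $T$ preserves $\IF\,{\mathcal U}_n$ (done above), get the four forms, kill the transposes and conjugate form as above, reduce to $T(X) = rUXV$, and then observe that the relation $\|(AB)\| = \|A\|\,\|B\|$ is preserved by $X\mapsto UXV$ for \emph{all} $A,B$ iff $V U = \mu I$ for a scalar $\mu$ — this is because $UXV\cdot UYV = UX(VU)YV$, so multiplicativity-of-the-relation under $T$ amounts to $\|X(VU)Y\| = \|X\|\,\|Y\|$ whenever $\|XY\| = \|X\|\,\|Y\|$, and taking $X = e_1 e_1^*$, $Y = e_1 e_1^*$ shows $|e_1^*(VU)e_1|\ge 1$... running over rank-ones as above forces $VU$ scalar unitary, hence $V = \mu U^*$. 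Absorbing $\mu$ into $r$ yields $T(X) = \gamma U X U^*$, and similarly the conjugate branch yields $T(X) = \gamma U\overline X U^*$. The main obstacle is the middle bookkeeping step — showing that $X\mapsto X M$ (equivalently that $M = VU$ scalar) is forced by preservation of the norm-multiplicative relation; the rank-one test pairs $e_ie_j^*$ and perturbations $e_i(e_j + e_k)^*$ do the job, but one must check enough of them to conclude $M$ is a scalar unitary, and this is the one place where $n\ge 3$ is genuinely used (to have room for the $e_ie_j^*$, $e_je_k^*$ product example in the transpose elimination and to avoid low-dimensional coincidences).
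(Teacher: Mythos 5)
Your strategy is essentially the paper's: show first (via your characterization that $A\in\IF\,{\mathcal U}_n\setminus\{0\}$ iff $\|AB\|=\|A\|\,\|B\|$ for all $B$, combined with surjectivity of $T$ --- this is Lemma~\ref{lam1}) that $T$ maps ${\mathcal U}_n$ into $\IF\,{\mathcal U}_n$, invoke Theorem~\ref{thm:main1} to get the four forms, and then use rank-one test pairs to force the middle factor to be scalar and to exclude the transposed forms. The sufficiency check is fine, and your final, ``cleanest organization'' of the non-transposed branch is correct: the hypothesis for $T(X)=rUXV$ is exactly ``$\|X(VU)Y\|=\|X\|\,\|Y\|$ whenever $\|XY\|=\|X\|\,\|Y\|$,'' and your tests with $e_ie_j^*$ and $e_i(e_j+e_k)^*$ do force $VU$ to be a scalar unitary. (The paper reaches the same conclusion a bit faster by taking $A=B=xx^*$ with $|x^*Wx|<1$ for $W=VU$ nonscalar.)

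The genuine defect is in your normalizations and, as a consequence, in the transpose elimination. Neither $X\mapsto (1/r)U^*T(X)V^*$ nor $X\mapsto r^{-1}U^*T(X)$ preserves the hypothesis: if $S(X)=U^*T(X)V^*$ then $S(A)S(B)=U^*T(A)V^*U^*T(B)V^*$, which is not $U^*T(A)T(B)V^*$, so you cannot ``reduce to $T(X)=X^T$'' this way --- you half-notice this (``$WXW'\cdot WYW'$ need not simplify'') but still rely on it. Concretely, for $T(X)=rUX^TV$ the relevant quantity is $\|T(A)T(B)\|=r^2\|A^T(VU)B^T\|$, and with your pair $A=e_1e_2^*$, $B=e_2e_3^*$ this equals $r^2|(VU)_{13}|$, not $0$; no contradiction follows at that stage. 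Two repairs: (i) as in the paper, replace $T$ by $X\mapsto VT(X)V^*/\gamma$, which \emph{does} preserve the hypothesis because $VXV^*\cdot VYV^*=VXYV^*$ and conjugation is an isometry; this reduces to $X\mapsto WX$ or $X\mapsto WX^T$ with $W=VU$, after which one proves $W$ scalar in both branches and only then kills the transpose with a pair such as $(E_{11},E_{12})$; or (ii) run your rank-one tests directly on $\|A^T(VU)B^T\|$: for $n\ge 3$ the pairs $(e_ie_j^*,\,e_je_k^*)$ force every entry of the unitary $VU$ to be unimodular, which is impossible. Either repair completes your argument; as written, the step ``$\|T(A)T(B)\|=\|A^TB^T\|=0$'' is not justified.
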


The sufficiency of the result is clear.  We will prove the converse after  the next two lemmas; the first one will be used also in the concluding subsections.

\begin{lemma}\label{L:basic}
	Let $A, B \in M_n(\IF)$ be nonzero.
	\begin{itemize}
		\item[{\rm (i)}] $\|AB\| = \|A\| \|B\|$ if and only if there exists a unit vector $x$ satisfying $B^* Bx = \|B\|^2 x$ and $A$ attains its norm at $Bx$.
		\item[{\rm (ii)}] $\|AB^*\| = \|A\| \|B^*\|$ if and only if there exists a unit vector $y$ that is norm-attaining for both $A$ and $B$.
	\end{itemize}
\end{lemma}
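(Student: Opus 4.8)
The plan is to derive (i) directly from submultiplicativity together with the compactness of the unit sphere (so the spectral norm is attained), and then to obtain (ii) by applying (i) to the pair $(A,B^*)$ and rephrasing the conclusion. Throughout I use one elementary fact: if $P$ is positive semidefinite Hermitian (real symmetric when $\IF=\IR$) with largest eigenvalue $\lambda$, and $z$ is a unit vector with $z^*Pz=\lambda$, then $Pz=\lambda z$ --- expand $z$ in an orthonormal eigenbasis and note that equality in $\sum_i\lambda_i|c_i|^2\le\lambda\sum_i|c_i|^2$ forces $c_i=0$ whenever $\lambda_i<\lambda$. Applied with $P=B^*B$ and $\lambda=\|B\|^2$, this says that a unit vector $x$ satisfies $B^*Bx=\|B\|^2x$ exactly when $\|Bx\|=\|B\|$; I shall call such an $x$ a \emph{maximal vector of $B$}, and I read ``$A$ attains its norm at $Bx$'' as $\|A(Bx)\|=\|A\|\,\|Bx\|$, i.e.\ $A$ attains its norm at the unit vector $Bx/\|B\|$.

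For (i), the ``if'' direction is immediate: if $x$ is a maximal vector of $B$ with $\|ABx\|=\|A\|\,\|Bx\|=\|A\|\,\|B\|$, then $\|AB\|\ge\|ABx\|=\|A\|\,\|B\|$, while submultiplicativity gives the reverse inequality. For ``only if'', choose a unit vector $z$ at which $AB$ attains its norm (over $\IR$ one may take $z$ real from a real singular value decomposition of $AB$). From $\|A\|\,\|B\|=\|AB\|=\|ABz\|\le\|A\|\,\|Bz\|\le\|A\|\,\|B\|$ both inequalities are equalities, so $\|Bz\|=\|B\|$ (that is, $z$ is maximal for $B$) and $\|A(Bz)\|=\|A\|\,\|Bz\|$; take $x=z$.

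For (ii), apply (i) to $(A,B^*)$, using $\|B^*\|=\|B\|$: the equality $\|AB^*\|=\|A\|\,\|B^*\|$ holds iff there is a unit vector $x$ with $BB^*x=\|B\|^2x$ such that $A$ attains its norm at $B^*x$. It remains to match this with the existence of a unit vector $y$ that is norm-attaining for both $A$ and $B$. Given such an $x$, put $y=B^*x/\|B\|$; then $\|y\|=1$ since $\|B^*x\|^2=x^*BB^*x=\|B\|^2$, one computes $B^*By=B^*(BB^*x)/\|B\|=\|B\|^2y$ so $\|By\|=\|B\|$, and $B^*x$ is a positive multiple of $y$ so $A$ attains its norm at $y$ as well. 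Conversely, if $\|Ay\|=\|A\|$ and $\|By\|=\|B\|$ for a unit vector $y$, the elementary fact gives $B^*By=\|B\|^2y$; set $x=By/\|B\|$, check $\|x\|=1$, $BB^*x=B(B^*By)/\|B\|=\|B\|^2x$ and $B^*x=\|B\|y$, so $A$ attains its norm at $B^*x$, and (i) applies.

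The whole argument is essentially bookkeeping with the singular value decomposition, so I do not expect a genuine obstacle. The only things to watch are keeping a vector distinct from its normalization (the vectors $Bx$ and $B^*x$ have norm $\|B\|$, not $1$) and checking that the norm-attaining vectors may be taken real when $\IF=\IR$, which the real singular value decomposition guarantees.
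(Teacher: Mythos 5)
Your proof is correct and follows essentially the same route as the paper: part (i) via the chain of equalities forcing $z$ to be norm-attaining for $B$ and $Bz$ for $A$, and part (ii) by applying (i) to $(A,B^*)$ and using the SVD relation to pass between norm-attaining vectors of $B^*$ and of $B$. Your version just spells out the normalization bookkeeping that the paper leaves implicit.
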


\begin{proof}
	For the first assertion, $\|AB\| = \|A\| \|B\|$ if and only if there exists a unit vector $x$ such that $B$ attains its norm at $x$ and $A$ attains its norm at $Bx$.  But $B$ attains its norm at $x$ if and only if $B^*Bx = \|B\|^2 x$.

	For the second assertion, $\|AB^*\| = \|A\| \|B^*\|$ if and only if there exists a unit vector $x$ such that $A$ attains its norm at $B^* x$ and $B^*$ attains its norm at $x$.  But, as follows from its SVD, $B$ attains its norm at $B^* x$.

	Conversely, if $y$ is norm-attaining for both $A$ and $B$, then $B^*By = \|B\|^2 y$.  Thus $By$ is norm-attaining for $B^*$, and its image under $B^*$ is norm-attaining for $A$.
\end{proof}

 \begin{lemma}\label{lam1} The following is equivalent for a matrix $A\in M_n(\IF)$.
 \begin{itemize}
  \item[{\rm (i)}] $\|AB\|=\|A\|\cdot\|B\|$ for every $B\in M_n(\IF)$
   \item[{\rm (ii)}] $\|AB^\ast\|=\|A\|\cdot\|B^\ast\|$ for every $B\in M_n(\IF)$
   \item[{\rm (iii)}] $\gamma A\in {\mathcal U}_n(\IF)$ for some $\gamma \in \IF$.
 \end{itemize}
 \end{lemma}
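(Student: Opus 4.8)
The plan is to prove the cycle $(iii)\Rightarrow(i)$, $(iii)\Rightarrow(ii)$, $(i)\Rightarrow(iii)$, $(ii)\Rightarrow(iii)$. Throughout I would tacitly assume $A\neq 0$: if $A=0$ then $(i)$ and $(ii)$ hold trivially while $(iii)$ fails, so the real content is for nonzero $A$ (consistent with the convention in Lemma~\ref{L:basic}). For $(iii)\Rightarrow(i)$ and $(iii)\Rightarrow(ii)$: write $\gamma A=U\in{\mathcal U}_n(\IF)$, so that $\|A\|=|\gamma|^{-1}$, and use that left multiplication by a unitary (orthogonal) matrix preserves singular values. Then $\|AB\|=|\gamma|^{-1}\|UB\|=|\gamma|^{-1}\|B\|=\|A\|\,\|B\|$ and likewise $\|AB^\ast\|=|\gamma|^{-1}\|UB^\ast\|=|\gamma|^{-1}\|B^\ast\|=\|A\|\,\|B^\ast\|$ for every $B\in M_n(\IF)$.

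For the converses the key device is to test the hypothesis against the rank one matrices $B=yy^\ast$, where $y$ ranges over all unit vectors of $\IF^n$. Here $\|B\|=\|B^\ast\|=1$ and, since $B$ is Hermitian, both $AB$ and $AB^\ast$ equal the rank one matrix $(Ay)y^\ast$, which has spectral norm $\|Ay\|$. Hence $(i)$ or $(ii)$ forces $\|Ay\|=\|A\|$ for every unit vector $y$; equivalently, the Hermitian (symmetric, if $\IF=\IR$) matrix $A^\ast A-\|A\|^2 I$ has vanishing quadratic form, so by the spectral theorem it is the zero matrix. Thus $A^\ast A=\|A\|^2 I$, i.e.\ $(\|A\|^{-1}A)^\ast(\|A\|^{-1}A)=I$, which gives $(iii)$ with $\gamma=\|A\|^{-1}$. (As an alternative route for $(ii)\Rightarrow(iii)$ one can invoke Lemma~\ref{L:basic}(ii): the only norm-attaining unit vectors of $B=yy^\ast$ are the modulus-one multiples of $y$, so the hypothesis forces $y$ to be norm-attaining for $A$ for every unit $y$, again yielding $A^\ast A=\|A\|^2 I$.)

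I do not expect a genuine obstacle here: the lemma collapses to a one-line computation once the family $B=yy^\ast$ is in hand. The only points that need a little care are the degenerate case $A=0$ and checking that nothing depends on the field — it does not, since the real spectral theorem still gives ``$y^\ast Hy=0$ for all $y$ $\Rightarrow$ $H=0$'' for real symmetric $H$, the singular value identities used for $(iii)\Rightarrow(i),(ii)$ are unchanged, and a real $A$ with $A^\ast A=\|A\|^2I$ produces a bona fide orthogonal matrix $\|A\|^{-1}A\in{\mathcal U}_n(\IR)$.
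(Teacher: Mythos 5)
Your proof is correct and follows essentially the same route as the paper's: the implication (iii)$\Rightarrow$(i),(ii) is the identical computation, and the converse again reduces to testing against rank-one matrices --- the paper argues by contraposition with the single SVD-based witness $B=Q^*E_{nn}$, while you test against the whole family $B=yy^*$ and conclude $A^*A=\|A\|^2I$ from the vanishing of a Hermitian quadratic form, a cosmetic difference. Your observation that for $A=0$ conditions (i) and (ii) hold while (iii) fails is a small degenerate point that the paper's proof glosses over, and your convention of restricting to $A\neq 0$ handles it correctly.
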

\begin{proof}
Claims (i) and (ii)  are clearly equivalent.

Suppose   (iii) holds with $A = \mu U$,   $U \in {\mathcal U}_n$ and $\mu\in\IF$. Then $\|AB\| = \|\mu UB\| = |\mu| \|UB\| = |\mu| \|B\| = \|A\| \|B\|$
 for any $B \in M_n(\IF)$, as claimed by (i).

 Conversely, suppose (i) holds but with a matrix $A$ which is  not a multiple of a matrix in~${\mathcal U}_n$.
 By the singular value decomposition, we see that $A = P (\sum_{j=1}^n s_j E_{jj} )Q$ with $P, Q \in {\mathcal U}_n$
 and $s_1 \ge \cdots \ge s_n \ge0$ such that $s_1 > s_n$. Let
 $B = Q^*E_{nn}$. Then $\|AB\| = s_n < s_1 =  \|A\|\cdot \|B\|$, a contradiction.
\end{proof}

\begin{proof}[Proof of Theorem~\ref{main2}]

 Suppose that $T$ is bijective real-linear such that
$$\|T(A)T(B)\| = \|T(A)\| \|T(B)\| \quad \hbox{ whenever } \quad \|AB\| = \|A\| \|B\|.$$
If $A \in {\mathcal U}_n$, then $\|AB\| = \|A\| \|B\|$ for all $B \in M_n(\IF)$. It follows that
 $\|T(A)T(B)\| = \|T(A)\| \|T(B)\|$. So, $T(A)$ is a multiple of a matrix in ${\mathcal U}_n$. By
 Theorem~\ref{thm:main1}, $T$ has the form
 $$A \mapsto  \gamma UAV \quad \hbox{ or } \quad A \mapsto  \gamma UA^TV \quad \hbox{ or } \quad A \mapsto  \gamma U\bar{A}V \quad \hbox{ or } \quad A \mapsto  \gamma U\bar{A}^TV$$
 for some   $\gamma \ne 0$ and $U, V \in {\mathcal U}_n$. We replace $T$ by the map
 $X \mapsto VT(X)V^*/\gamma$ (or, in the last two forms, by the map $X \mapsto VT(\bar{X})V^*/\gamma$; notice that $X\mapsto\bar{X}$ preserves norm-multiplicative pairs) and assume that $T$ has the form (a) $X \mapsto WX$, or (b) $X \mapsto WX^T$ with $W = VU$.
 Suppose $W$ is not a scalar matrix. If (a) holds, then there is a unit vector $x$ such that $|x^\ast Wx|<1$
 so that  we have $\|AB\| = \|A\| \|B\|$
 for $A=B=xx^\ast$, but
 $$\|T(A)T(B)\| = \|Wxx^*W xx^*\| = |x^*Wx| \|Wxx^*\| < \|Wxx^*\| \|W xx^*\|.$$
 Similarly, we can show that $W$ is a scalar matrix if (b) holds.  Thus, $T$ has the form $X \mapsto \mu UAU^*$
 or $X \mapsto \mu UA^T U^*$. Again, we may replace $T$ by $X \mapsto U^*T(X)U/\mu$ and assume that
 $T(X) = X$ for all $X$, or $T(X) = X^T$ for all $X$. Note that the latter case
 cannot hold because we can choose $A = E_{11}$ and $B = E_{12}$ so that $\|AB\| = \|A\| \|B\|$
 but $\|A^TB^T\| = 0  <  \|A^T\| \|B^T\|$.
\end{proof}

\subsection{Related results}
Clearly
$\|AB^*\| \le \|A\| \|B^*\| = \|A\| \|B\|$.
Using the same ideas to prove Theorem~\ref{main2}, we will also prove the following:

\begin{theorem}\label{thm:generalization_n>2} Let $\|\cdot\|$ be the spectral norm on $M_n(\IF)$, and let $T\colon  M_n(\IF) \rightarrow M_n(\IF)$,  $n\ge 3$, be a bijective real-linear map. The following are equivalent.
\begin{itemize}
\item[{\rm (i)}] $\|T(A)^* T(B)\| = \|T(A)\| \|T(B)\|$ whenever $\|A^*B\| = \|A\| \|B\|$.
\item[{\rm (ii)}] $\|T(A) T(B)^*\| = \|T(A)\| \|T(B)\|$ whenever $\|AB^*\| = \|A\| \|B\|$.
\item[{\rm (iii)}] There are $r > 0$, and $U, V \in {\mathcal U}_n(\IF)$ such that $T$ has the form
$A \mapsto rUAV$ or $A \mapsto rU\bar{A}V$.
\end{itemize}
\end{theorem}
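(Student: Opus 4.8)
The plan is to prove the four implications (iii)\,$\Rightarrow$\,(i), (iii)\,$\Rightarrow$\,(ii), (i)\,$\Rightarrow$\,(iii), (ii)\,$\Rightarrow$\,(iii), which together give the three-way equivalence. The two implications out of (iii) are the routine ``sufficiency'' directions, checked by direct computation exactly as in the remark following Theorem~\ref{main2}: if $T(A)=rUAV$ then $T(A)^*T(B)=r^2V^*A^*BV$ and $T(A)T(B)^*=r^2UAB^*U^*$; if $T(A)=rU\overline{A}V$ then, using $(\overline{A})^*=A^T$, the fact that $\overline{M}$ has the same singular values as $M$, and the identities $\|A^T\overline{B}\|=\|\overline{A^T\overline{B}}\|=\|A^*B\|$ and $\|\overline{A}B^T\|=\|\overline{\overline{A}B^T}\|=\|AB^*\|$, one gets $\|T(A)^*T(B)\|=r^2\|A^*B\|$ and $\|T(A)T(B)^*\|=r^2\|AB^*\|$. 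In every case the product norm differs from $\|A^*B\|$, resp. $\|AB^*\|$, only by the factor $r^2=\|T(A)\|\,\|T(B)\|/(\|A\|\,\|B\|)$, so (i) and (ii) hold — in fact with ``if and only if'' in place of ``whenever''.

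For the substantive directions the first step, shared by (i)\,$\Rightarrow$\,(iii) and (ii)\,$\Rightarrow$\,(iii), is to show that $T$ maps ${\mathcal U}_n(\IF)$ into $\IF\,{\mathcal U}_n(\IF)$, so that Theorem~\ref{thm:main1} applies. I would use Lemma~\ref{lam1} reformulated as follows: since $\|A\|=\|A^*\|$, part~(i) of that lemma applied to $A^*$ says $A\in\IF\,{\mathcal U}_n(\IF)$ if and only if $\|A^*B\|=\|A\|\,\|B\|$ for all $B$, and part~(ii) says this is also equivalent to $\|AB^*\|=\|A\|\,\|B\|$ for all $B$. Thus, assuming (i), for $A\in{\mathcal U}_n(\IF)$ we have $\|A^*B\|=\|A\|\,\|B\|$ for every $B$, hence $\|T(A)^*T(B)\|=\|T(A)\|\,\|T(B)\|$ for every $B$; since $T$ is onto, this reads $\|T(A)^*C\|=\|T(A)\|\,\|C\|$ for all $C\in M_n(\IF)$, forcing $T(A)\in\IF\,{\mathcal U}_n(\IF)$. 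The same argument with part~(ii) of Lemma~\ref{lam1} works under hypothesis (ii).

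By Theorem~\ref{thm:main1}, $T$ then has one of the forms $A\mapsto\gamma UAV$, $A\mapsto\gamma UA^TV$, $A\mapsto\gamma U\overline{A}V$, $A\mapsto\gamma U\overline{A}^TV$ with $\gamma\neq0$ and $U,V\in{\mathcal U}_n(\IF)$. The last step is to plug each form into the hypothesis and eliminate the two ``transpose'' forms. A short computation, using $(A^T)^*=\overline{A}$, $\overline{A}^T=A^*$ and the norm identities above, shows that under (i) the forms $\gamma UAV$ and $\gamma U\overline{A}V$ give $\|T(A)^*T(B)\|=|\gamma|^2\|A^*B\|$ (consistent with (i)), while $\gamma UA^TV$ and $\gamma U\overline{A}^TV$ give $\|T(A)^*T(B)\|=|\gamma|^2\|AB^*\|$. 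The latter is incompatible with (i): take $A=E_{11}$, $B=E_{12}$, so that $\|A^*B\|=\|E_{12}\|=1=\|A\|\,\|B\|$ (the premise holds) but $\|AB^*\|=\|E_{11}E_{21}\|=0$. A parallel computation under (ii) shows the two transpose forms give $\|T(A)T(B)^*\|=|\gamma|^2\|A^*B\|$, and the pair $A=E_{11}$, $B=E_{21}$ (a common norm-attaining vector $e_1$, so $\|AB^*\|=\|E_{12}\|=1=\|A\|\,\|B\|$ yet $\|A^*B\|=\|E_{11}E_{21}\|=0$) rules it out. Hence only $A\mapsto\gamma UAV$ and $A\mapsto\gamma U\overline{A}V$ survive; writing $\gamma=r\omega$ with $r=|\gamma|>0$ and $|\omega|=1$, and absorbing $\omega$ into $U$, we arrive at (iii).

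The only mildly delicate point is the bookkeeping with adjoints and conjugates — keeping track of which of the four forms reduces to $\|A^*B\|$ and which to $\|AB^*\|$ once the product is formed — but the explicit pairs $(E_{11},E_{12})$ and $(E_{11},E_{21})$ dispose of the bad cases at once, so there is no genuine obstacle. Note that $n\ge3$ enters only through the appeal to Theorem~\ref{thm:main1}; over $\IR$ the conjugate forms coincide with the plain ones, and (iii) reduces to $A\mapsto rUAV$.
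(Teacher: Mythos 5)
Your proposal is correct and follows essentially the same route as the paper: use Lemma~\ref{lam1} to see that $T$ sends ${\mathcal U}_n(\IF)$ into $\IF\,{\mathcal U}_n(\IF)$, invoke Theorem~\ref{thm:main1} to get the four canonical forms, and kill the two transposed forms with the rank-one pairs $(E_{11},E_{12})$ and $(E_{11},E_{21})$ (the latter being exactly the paper's counterexample). The only cosmetic difference is that the paper first reduces (i) to (ii) via the conjugation trick of Lemma~\ref{L:equiv} and then argues once, whereas you treat the two implications symmetrically and in parallel, which is equally valid.
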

Let us firstly directly show that (i) and (ii) are equivalent.
\begin{lemma}\label{L:equiv}
Let $S\colon M_n(\IF) \to M_n(\IF)$ be a real-linear surjective map.  Let $\Phi(X)=X^*$ and let $T=\Phi \circ S \circ \Phi$.  Then
$$\|A^*B\| = \|A^*\| \|B\| \implies \|S(A)^* S(B)\| = \|S(A)^*\| \|S(B)\| \quad \forall A,B \in M_n(\IF)$$
if and only if
$$\|CD^*\| = \|C\| \|D^*\| \implies \|T(C) T(D)^* \| = \|T(C)\| \|T(D)^* \|\quad \forall C,D \in M_n(\IF).$$
\end{lemma}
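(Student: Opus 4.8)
The plan is to observe that the two implications differ only by the change of variables $C=A^{*}$, $D=B^{*}$, and that $\Phi$ is a norm-preserving involution which conjugates $S$ into $T$ (and conversely). First I would record the elementary facts that $\Phi\circ\Phi$ is the identity, that $\Phi$ is real-linear, and that $\|\Phi(X)\|=\|X^{*}\|=\|X\|$ for every $X$; consequently $T$ is real-linear and surjective, and $S=\Phi\circ T\circ\Phi$, so the hypotheses imposed on $S$ and on $T$ are completely symmetric and it suffices to prove one implication.

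For the forward implication, suppose the displayed condition holds for $S$, and let $C,D\in M_n(\IF)$ satisfy $\|CD^{*}\|=\|C\|\,\|D^{*}\|$. Put $A=\Phi(C)=C^{*}$ and $B=\Phi(D)=D^{*}$. Then $A^{*}B=CD^{*}$, while $\|A^{*}\|=\|C\|$ and $\|B\|=\|D^{*}\|$, so the pair $(A,B)$ satisfies the hypothesis $\|A^{*}B\|=\|A^{*}\|\,\|B\|$ of the condition for $S$. Applying that condition yields $\|S(A)^{*}S(B)\|=\|S(A)^{*}\|\,\|S(B)\|$. It then remains to unwind the definitions: since $T=\Phi\circ S\circ\Phi$ we have $T(C)=S(A)^{*}$ and $T(D)^{*}=S(B)$, whence $T(C)T(D)^{*}=S(A)^{*}S(B)$, $\|T(C)\|=\|S(A)^{*}\|$, and $\|T(D)^{*}\|=\|S(B)\|$; substituting gives $\|T(C)T(D)^{*}\|=\|T(C)\|\,\|T(D)^{*}\|$, as required.

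The reverse implication is obtained by the identical argument after interchanging the roles of $S$ and $T$ (using $S=\Phi\circ T\circ\Phi$) and of the pairs $(A,B)$ and $(C,D)$. There is no genuine obstacle here: the only points requiring care are consistent bookkeeping of adjoints together with the identity $\|X^{*}\|=\|X\|$, and checking that the substitution $(C,D)\mapsto(C^{*},D^{*})$ carries the hypothesis and the conclusion of one implication exactly onto those of the other. I would also remark that surjectivity of $S$ is not actually needed for this equivalence, but it is harmless to assume and is convenient for the subsequent application.
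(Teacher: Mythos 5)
Your proof is correct and follows essentially the same route as the paper's: both rest on the substitution $C=A^{*}$, $D=B^{*}$ together with unwinding $T=\Phi\circ S\circ\Phi$ and the identity $\|X^{*}\|=\|X\|$. Your version merely spells out the bookkeeping (and correctly notes that surjectivity is not needed) where the paper compresses it into two lines.
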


\begin{proof}
Notice that $\Phi$ is conjugate-linear, so $T$ is
real-linear. Now,
the first statement is equivalent to
\begin{equation*}
\|A^* (B^*)^*\| = \|A^*\| \|(B^*)^*\| \implies \|T(A^*) T(B^*)^*\| = \|T(A^*)\| \|T(B^*)^*\|.
\end{equation*}
Apply it with $C=A^*$ and $D=B^*$ to get the second statement.
\end{proof}
\begin{proof}[Proof of Theorem~\ref{thm:generalization_n>2}]
  By Lemma~\ref{L:equiv}  it suffices to prove that (ii) implies (iii). By Lemma~\ref{lam1}  such $T$ will leave the set $\IF\, {\mathcal U}_n$ invariant. Hence, $T(I)\in\IF\,{\mathcal U}_n$ and by replacing $T$ with $X\mapsto T(I)^{-1}T(X)$ we achieve that it is unital. The rest of the arguments follow  the proof of Theorem~\ref{main2}; we only need to show that the transposition map will not preserve norm multiplicative pairs in (ii). This follows by choosing
  $(A,B)=(E_{11},E_{21})$.
\end{proof}

\section{Real-linear preservers on  $2$-by-$2$ matrices}
In many preserver problems on matrices or operators,
the case of $2$-by-$2$ matrices is special because there are many special subgroups
in the semigroup of linear maps on 2-by-2 matrices.  This also happens to our problems.
 For one, the real-linear bijective preservers of $\IF\,{\mathcal U}_2$ have a richer structure than with $\IF\,{\mathcal U}_n$, $n\ge 3$, so the proofs will have to use different arguments.    Besides, as we show in Theorem~\ref{n=2} below, the preservers of  norm multiplicative pairs on real $2$-by-$2$  matrices also have richer structure than for $n\ge 3$.

\subsection{Preservers of multiples of unitary/orthogonal matrices}
When $n = 2$, we note that the eight matrices
\begin{equation}\label{eq:quaternions}
 I,\quad \Sigma_1 =\left[\begin{matrix}0 & i\\
                             i &0
                            \end{matrix}\right],\quad
 \Sigma _2  =\left[\begin{matrix}0 &1\\
                             -1&0
                            \end{matrix}\right],\quad\Sigma_3=\left[\begin{matrix}i&0\\
                             0 &-i
                            \end{matrix}\right] , \quad i I,\quad  i\Sigma_1,\quad i\Sigma_2,\quad i\Sigma_3
\end{equation}
are all unitary and form a real-linear basis for $M_2(\IC)$. Also, $\Sigma_1,\Sigma_2,\Sigma_3$  obey the same multiplication rules as the standard quaternionic units and thus
$$\bV_3:=\span_{\IR}\{I,\Sigma_1,\Sigma_2,\Sigma_3\}$$
is (isometrically isomorphic to) a  quaternion division algebra.
                            We have the following.

\begin{theorem}\label{P:unitary}
A bijective real-linear map $T\colon  M_2(\IF) \rightarrow M_2(\IF)$
maps matrices in ${\mathcal U}_2(\IF)$ to multiples of matrices in ${\mathcal U}_2(\IF)$ if and only if $T(I)$ is a nonzero multiple of a  matrix  in ${\mathcal U}_2(\IF)$ and the map $L$ defined by $L(X) = T(I)^{-1} T(X)$ satisfies one of the following.
\begin{itemize}
\item[{\rm (i)}] $\IF = \IR$ and $L(\bV_j) = \bV_j$ for $j = 1,2$, where
$\bV_1 = \span\{I_2, E_{12}-E_{21}\}$ and $\bV_2 = \span\{E_{11}-E_{22}, E_{12} + E_{21}\}$.
\item[{\rm (ii)}] $\IF=\IC$ and there are $U,V\in {\mathcal U}_2(\IC)$ and $\mu\in\IC\setminus\IR$ such that the unital map $L'\colon X \mapsto
 VL(UXU^\ast)V^\ast$ satisfies $L'(i I)=\mu I$ and
               $$L'(\Sigma_j) = a_j \Sigma_j +  b_j I_2\quad \hbox{and}\quad L'(i\Sigma_j)=\mu L'(\Sigma_j)$$     for some
real $a_1, a_2, a_3, b_1, b_2, b_3$ with  $a_1\ge a_2\ge| a_3|>0$. If $T$ is complex-linear, then $\mu=i$.
\end{itemize}
\end{theorem}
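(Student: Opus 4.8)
The plan is to check sufficiency by a direct computation and to prove necessity by reducing to unital maps and then treating $\IF=\IR$ and $\IF=\IC$ separately. For \emph{sufficiency}: in case (i) every nonzero matrix in $\bV_1$ is a positive multiple of a rotation and every nonzero matrix in $\bV_2$ is a positive multiple of a reflection, so ${\mathcal U}_2(\IR)\subseteq\bV_1\cup\bV_2$ and $(\bV_1\cup\bV_2)\setminus\{0\}=\IR\,{\mathcal U}_2(\IR)$; since $\bV_1\cap\bV_2=\{0\}$ and $\dim_\IR\bV_j=2$, the equalities $L(\bV_j)=\bV_j$ force $L$ to be bijective, and then $T=T(I)L$ carries ${\mathcal U}_2(\IR)$ into $\IR\,{\mathcal U}_2(\IR)$. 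In case (ii) one uses that $\{I,\Sigma_1,\Sigma_2,\Sigma_3\}$ is a complex basis of $M_2(\IC)$: writing $X=\sum_{j=0}^{3}z_j\Sigma_j$ with $\Sigma_0=I$, the matrix $X$ lies in ${\mathcal U}_2(\IC)$ exactly when $(z_0,\dots,z_3)=e^{i\theta}(r_0,\dots,r_3)$ for some real $r_j$ with $\sum r_j^2=1$, and in $\IC\,{\mathcal U}_2(\IC)$ exactly when its coordinate vector is a complex scalar times a real vector. Substituting $L'(\Sigma_j)=a_j\Sigma_j+b_jI$ and $L'(i\Sigma_j)=\mu L'(\Sigma_j)$ shows that $L'$ sends such vectors to such vectors, while $a_j\neq0$ and $\mu\notin\IR$ make $L'$ (hence $L$, hence $T=T(I)L$) bijective.

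For \emph{necessity}, since $I\in{\mathcal U}_2(\IF)$ and $T$ is injective, $T(I)$ is a nonzero multiple of a matrix in ${\mathcal U}_2(\IF)$, so we may replace $T$ by the unital map $L=T(I)^{-1}T$; this $L$ still maps ${\mathcal U}_2(\IF)$ into $\IF\,{\mathcal U}_2(\IF)$, and in fact maps $\IF\,{\mathcal U}_2(\IF)$ into itself since $cU=|c|\cdot(|c|^{-1}cU)$ with $|c|^{-1}cU\in{\mathcal U}_2(\IF)$. When $\IF=\IR$, the group ${\mathcal U}_2(\IR)=O(2)$ has two connected components $SO(2)\subseteq\bV_1$ and $O_-(2)\subseteq\bV_2$, and $\IR\,{\mathcal U}_2(\IR)=(\bV_1\setminus\{0\})\sqcup(\bV_2\setminus\{0\})$. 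As $L(SO(2))$ is connected and contains $L(I)=I\in\bV_1$, it lies in $\bV_1$; since $SO(2)$ spans $\bV_1$, this gives $L(\bV_1)=\bV_1$. If $L(O_-(2))$ also lay in $\bV_1$, then $L$ would send the spanning set $O(2)$, hence all of $M_2(\IR)$, into $\bV_1$, contradicting surjectivity; so $L(O_-(2))\subseteq\bV_2$ and $L(\bV_2)=\bV_2$, which is form (i).

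The complex case rests on three facts: $\bV_3\setminus\{0\}$ and $(i\bV_3)\setminus\{0\}$ consist of multiples of unitaries; $\bV_3$ is invariant under conjugation by any unitary (it is $\IR I$ plus the space of traceless skew-Hermitian matrices); and ${\mathcal U}_2(\IC)=\{e^{i\theta}q:\theta\in\IR,\ q\in SU(2)\}$, where $SU(2)=\{X\in{\mathcal U}_2(\IC):\det X=1\}=\bV_3\cap{\mathcal U}_2(\IC)$. A key lemma is: a real-linear subspace $W\subseteq M_2(\IC)$ with $W\setminus\{0\}\subseteq\IC\,{\mathcal U}_2(\IC)$ has $\dim_\IR W\le4$ — by Adams' theorem on the maximal dimension of a space of invertible matrices (as used in the proof of Lemma~\ref{lem:rk-1-reduction}), applied to $M_2(\IC)\subseteq M_4(\IR)$ — and if moreover $\dim_\IR W=4$ and $I\in W$ then $W=\bV_3$, because for every $A\in W$ the pencil $(I+tA)(I+tA)^*\in\IR I$ ($t\in\IR$) forces $A$ to have scalar Hermitian part and $AA^*$ scalar, hence $A\in\IC I\cup\bV_3$, and a subspace contained in the union of two subspaces lies in one of them (with $\dim_\IR W=4>2$ ruling out $\IC I$). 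Applying this to $W=T(\bV_3)$, which contains $T(I)=I$, gives $T(\bV_3)=\bV_3$; applying the $4$-dimensional case to $T(i\bV_3)$ (after left-multiplication by a unitary putting $I$ into the space) gives $T(i\bV_3)=V_0\bV_3$ for some $V_0\in{\mathcal U}_2(\IC)$, and bijectivity of $T$ forces $\bV_3\cap V_0\bV_3=\{0\}$, whence $V_0\bV_3\neq\bV_3$, i.e.\ $\det V_0\neq1$.

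Next comes the rigidity on $\bV_3$. For a unit quaternion $q$, $e^{i\theta}q\in{\mathcal U}_2(\IC)$, so $\cos\theta\,T(q)+\sin\theta\,T(iq)$ is a multiple of a unitary for all $\theta$; the pencil condition reduces to $\re\big(T(q)T(iq)^*\big)$ being scalar. Writing $T(q)=\|T(q)\|\,p$ and $T(iq)=\|T(iq)\|\,V_0v$ with $p,v\in SU(2)$, this says $pv^*V_0^*$ is a unitary with scalar Hermitian part, so $pv^*V_0^*=e^{i\phi}I$ or $pv^*V_0^*\in SU(2)$; but $\det(pv^*V_0^*)=\overline{\det V_0}\neq1$ excludes $SU(2)$, so $pv^*V_0^*=e^{i\phi(q)}I$, and unwinding (using $V_0V_0^*=I$) yields $T(iq)=\lambda(q)\,T(q)$ for some $\lambda(q)\in\IC^*$. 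That $\lambda$ is constant follows from linear independence: if $q_1\neq\pm q_2$ in $SU(2)$, then $T(q_1),T(q_2)$ are $\IC$-linearly independent (otherwise applying $T^{-1}$ would give $q_1=\pm q_2$), so comparing $T\big(i(q_1+q_2)\big)=\lambda(q_1)T(q_1)+\lambda(q_2)T(q_2)$ with $\lambda(q_3)\big(T(q_1)+T(q_2)\big)$ for $q_3=(q_1+q_2)/\|q_1+q_2\|\in SU(2)$ forces $\lambda(q_1)=\lambda(q_2)$. Writing $\mu$ for the common value, $T(iX)=\mu T(X)$ for all $X\in\bV_3$, with $\mu\notin\IR$ (else $T(iI-\mu I)=0$), and $\mu=i$ if $T$ is complex-linear. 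It remains to normalize $T|_{\bV_3}$: it fixes $I$ and acts on the $3$-dimensional space of traceless skew-Hermitian matrices (with basis $\Sigma_1,\Sigma_2,\Sigma_3$) as $\xi\mapsto\ell(\xi)I+\Psi(\xi)$ for an invertible linear map $\Psi$; conjugation by a unitary acts there as an arbitrary element of $SO(3)$, so the singular value decomposition of $\Psi$ (with the sign of the smallest singular value matching $\det\Psi$) lets us choose $U,V\in{\mathcal U}_2(\IC)$ so that $X\mapsto VL(UXU^*)V^*$ sends $\Sigma_j$ to $a_j\Sigma_j+b_jI$ with $a_1\ge a_2\ge|a_3|>0$; the relations $L'(iI)=\mu I$ and $L'(i\Sigma_j)=\mu L'(\Sigma_j)$ persist because $T(iY)=\mu T(Y)$ for $Y\in\bV_3$. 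I expect the rigidity step $T(iX)=\mu T(X)$ to be the main obstacle: it hinges on coupling the great-circle (pencil) condition for $e^{i\theta}q$ with the determinant obstruction $\det V_0\neq1$ — without the latter, $pv^*V_0^*$ could move through $SU(2)$ and $\lambda(q)$ need not be constant — together with the subspace classification, where Adams' theorem and the $2$-by-$2$ quaternionic structure come in.
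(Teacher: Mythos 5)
Your proof is correct, and in the complex case it follows a genuinely different route from the paper's, though both ultimately rest on the quaternionic structure of $\bV_3$. The paper works with the set $\{X: X+tI\in\IC\,{\mathcal U}_2(\IC)\ \forall t\in\IR\}=\bV_3\cup\IC I$, deduces $T(\bV_3)=\bV_3$ and $T(\IC I)=\IC I$ by a subspace-in-a-union-of-subspaces argument, sets $T(iI)=\mu I$, passes to the auxiliary map $X\mapsto T(iI)^{-1}T(X)$ to get $T(i\bV_3)=\mu\bV_3$, and then obtains $T(iX)\in\IC\, T(X)$ from the closure of $\bV_3$ under multiplication via the intersection $(\bV_3\cup\IC I)\cap\mu\bV_3=\IC I$. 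You instead package the same pencil computation $(I+tA)(I+tA)^*\in\IR I$ as a classification of four-dimensional real subspaces of invertible matrices containing $I$, apply it (after a unitary translation) to get $T(i\bV_3)=V_0\bV_3$, and derive the rigidity $T(iq)=\lambda(q)T(q)$ from the great-circle pencil $e^{i\theta}q$ combined with the determinant obstruction $\det V_0\neq 1$, which rules out the $SU(2)$ branch of ``unitary with scalar Hermitian part.'' Your version postpones the identification of the scalar $\mu$ until the constancy of $\lambda$ is established, whereas the paper needs $T(iI)=\mu I$ up front; the determinant invariant distinguishing the cosets $V_0\,SU(2)$ is a nice alternative to the paper's algebraic intersection argument, at the cost of a slightly longer setup (the Adams bound you invoke is not actually needed, since $\dim_\IR T(\bV_3)=4$ is automatic from injectivity). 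In the real case your connectedness argument for $SO(2)$ and $O_-(2)$ replaces the paper's dimension count on $\bV_1\oplus\bV_2$; both are fine. The final normalization via the $SU(2)\to SO(3)$ covering and the signed SVD of $\Psi$ is exactly the paper's Lemma~\ref{lem:reduction-2-by-2}, and your sufficiency computation matches the paper's converse. Two small points worth making explicit if you write this up: the invertibility of $\Psi$ (hence $a_3\neq 0$) follows from injectivity of $T$ together with $T(I)=I$, since $\Psi(\xi)=0$ would force $\xi\in\IR I$; and in the constancy-of-$\lambda$ step the $\IC$-linear independence of $T(q_1),T(q_2)$ should be justified via $\IC\,T(q_1)\cap\bV_3=\IR\,T(q_1)$ (a determinant argument), since $T^{-1}$ is only real-linear.
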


\begin{remark}
When $n \ge 3$, a linear bijection sending ${\mathcal U}_n(\IF)$ to
$\IF {\mathcal U}_n(\IF)$ will map rank-1 matrices to rank-1 matricees. This is not the case when $n = 2$.
The map $L$  in Theorem~\ref{P:unitary}
 could send rank-1 matrices to rank-2 matrices.  For example, consider a linear map  $L$ which fixes $I$ and $E_{12}-E_{21}$, and swaps $E_{11}-E_{22}$ with $2(E_{12}+E_{21})$. Then $L$ leaves the sets $\bV_1,\bV_2$, and $\bV_3=\span_{\IR}\{I,\Sigma_1,\Sigma_2,\Sigma_3\}$ invariant   but maps $E_{11}$ to the rank-2 matrix $\frac{1}{2}I + E_{12}+E_{21}$.
\end{remark}

The  proof  will rely on the following result.
\begin{lemma}\label{lem:reduction-2-by-2}
Let $\Sigma_j$ be as in~\eqref{eq:quaternions}. Suppose a real-linear and unital $L\colon M_2(\IC)\to M_2(\IC)$  leaves $\bV_3=\span_{\IR}\{I,\Sigma_1,\Sigma_2,\Sigma_3\}$ invariant. Then there are
unitary $U, V \in M_2(\IC)$ such that $L'\colon M_2(\IC)\to M_2(\IC)$, defined by
\[
L'(X) = V L(U X U^*)\,V^*
\]
satisfies
\[
L'(I) = I,\quad L'(\Sigma_{1}) = a_1 \Sigma_1 + b_1 I,\quad L'(\Sigma_{2}) = a_2 \Sigma_2 +  b_2I,\quad L'(\Sigma_3) = a_3 \Sigma_3 + b_3 I,
\]
where $a_k,b_k$ are real with $a_1\ge a_2\ge |a_3|$.
\end{lemma}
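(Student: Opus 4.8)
The plan is to transfer the problem to the space of imaginary quaternions $W:=\span_\IR\{\Sigma_1,\Sigma_2,\Sigma_3\}$ and reduce it to a singular value decomposition of a single real $3\times 3$ matrix. First I would record the structural facts already noted: $\bV_3=\IR I\oplus W$ is a quaternion division algebra, and for $q=bI+w$ with $b\in\IR$ and $w\in W$ one has $qq^*=(b^2+\|w\|_W^2)I$, where $\|\cdot\|_W$ is the Euclidean norm for which $\Sigma_1,\Sigma_2,\Sigma_3$ is an orthonormal basis; in particular the operator norm of $q$ equals $(b^2+\|w\|_W^2)^{1/2}$. Since $L$ is unital with $L(\bV_3)\subseteq\bV_3$, I can write $L(\Sigma_j)=\sum_k c_{kj}\Sigma_k+b_jI$, which defines a real matrix $C=(c_{kj})\in M_3(\IR)$ and a real vector $(b_j)$; the $I$-components $b_j$ play no role in the conclusion, and only $C$ matters.

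The key observation is that conjugation by unitaries realizes exactly the rotation group $SO(3)$ on $W$. Indeed, the unit elements of $\bV_3$ (those $q$ with $qq^*=I$) form a copy of $SU(2)$; every unitary of $M_2(\IC)$ is a scalar multiple of such a $q$; scalars act trivially by conjugation; and $q\bV_3q^{-1}=\bV_3$ because $\bV_3$ is an algebra containing $q^{-1}$. Hence $X\mapsto qXq^*$ preserves $\bV_3$, is orthogonal on $W$ (it preserves the operator norm), and lies in $SO(3)$ since $SU(2)$ is connected. The induced homomorphism $SU(2)\to SO(3)$ has kernel $\{\pm I\}$ (an element commuting with all of $W$ is central in $\bV_3$, hence a real scalar), so by a dimension count it is onto — the standard two-to-one cover. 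Consequently, for every $R\in SO(3)$ there is a unitary $U$ with $U\Sigma_jU^*=\sum_k R_{kj}\Sigma_k$ for all $j$.

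I would then compute the effect of the reduction: if the unitaries $U$ and $V$ induce rotations $R_1,R_2\in SO(3)$ on $W$, a short calculation gives $L'(\Sigma_j)=VL(U\Sigma_jU^*)V^*=\sum_l(R_2CR_1)_{lj}\Sigma_l+\beta_jI$ for suitable real $\beta_j$, while $L'(I)=VL(I)V^*=I$. So the lemma reduces to finding $R_1,R_2\in SO(3)$ with $R_2CR_1=\diag(a_1,a_2,a_3)$ and $a_1\ge a_2\ge|a_3|$. This is the singular value decomposition of $C$ with an orientation fix: write $C=P\,\diag(s_1,s_2,s_3)\,Q$ with $P,Q\in O(3)$ and $s_1\ge s_2\ge s_3\ge 0$, and if $\det P=-1$ (resp. $\det Q=-1$) absorb the sign into $s_3$ by negating the last column of $P$ (resp. last row of $Q$); this yields $P',Q'\in SO(3)$ and $C=P'\diag(a_1,a_2,a_3)Q'$ with $a_1=s_1\ge a_2=s_2\ge|a_3|=s_3\ge 0$. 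Taking $R_2=(P')^{-1}$, $R_1=(Q')^{-1}$ in $SO(3)$ and choosing corresponding unitaries $U,V$ as above gives the desired $L'$.

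The only mildly delicate points are the exactness of the $SU(2)$–$SO(3)$ correspondence — which I would either prove by the connectedness-and-kernel argument sketched above or simply quote, since $\bV_3$ has already been identified with the quaternions — and the orientation bookkeeping in the SVD. The latter is precisely what forces the conclusion to read $a_1\ge a_2\ge|a_3|$ rather than $a_1\ge a_2\ge a_3\ge 0$: orientation-reversing transformations of $W$ are not available from unitary conjugation (they would require transposition or complex conjugation of the matrix). Beyond these bookkeeping issues I expect no genuine obstacle.
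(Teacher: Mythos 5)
Your proposal is correct and follows essentially the same route as the paper: reduce $L|_{\bV_3}$ to a real $3\times 3$ matrix on $\Span_{\IR}\{\Sigma_1,\Sigma_2,\Sigma_3\}$, apply the $SO(3)$-valued singular value decomposition (absorbing any orientation defect into the sign of the smallest singular value, which is exactly why the conclusion reads $a_1\ge a_2\ge|a_3|$), and realize the two rotations by unitary conjugations via the $\mathrm{SU}(2)\to SO(3)$ (equivalently $\mathrm{U}(2)/\mathrm{U}(1)\cong SO(3)$) correspondence. The only difference is that you sketch a self-contained proof of the surjectivity of that correspondence where the paper cites standard references; your orientation bookkeeping and the observation that the $I$-components $b_j$ are irrelevant are both handled correctly.
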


\begin{proof}
Since $L(I) = I$ and $L(\bV_3) \subseteq \bV_3$, if we use the orthonormal
basis of $\bV_3$ (with respect to standard inner product $\langle A,B\rangle:=\tr B^\ast A$)
\[
{\bf B} = \left\{\frac{I}{\sqrt{2}},\ \frac{\Sigma_1}{\sqrt{2}},\ \frac{\Sigma_2}{\sqrt{2}},\ \frac{\Sigma_3}{\sqrt{2}}\right\},\]
then the matrix of transformation  $L$ with respect to the basis $\bf B$ has the form
\[
R =
\begin{bmatrix}
1 &  w \\
0 & S
\end{bmatrix}
\]
where $w = (w_1, w_2, w_3)$ is a real vector and $S$ is a real matrix.
Let $P, Q \in \mathrm{SO}(3)$ be such that
\[
P S Q = \operatorname{diag}(a_1, a_2, a_3),
\]
where
$a_1 \ge a_2 \ge |a_3|$ are the singular values of $S$.
Let $\hat{P} = [1] \oplus P$ and $\hat{Q} = [1] \oplus Q$. Then
\[
\hat{P} \, R \, \hat{Q} =
\begin{bmatrix}
1 &  b \\
0 & D
\end{bmatrix}
\]
with $b = w Q = (b_1, b_2, b_3)$ and $D = \operatorname{diag}(a_1, a_2, a_3)$.
Here, $a_1 a_2 a_3 = \det(S)$.

By the classical correspondence of Lie groups,  $\mathrm{U}(2) / \mathrm{U}(1)  \cong \mathrm{SO}(3)$, where $\mathrm{U}(1)$ is considered  as a normal subgroup of $\mathrm{U}(2)$ consisting of all unimodular multiples of $I$. In fact, this correspondence takes $U\in \mathrm{U}(2)$ into (an orthogonal) matrix representing the  transformation $X\mapsto UXU^\ast$, restricted to an invariant subspace $\span_{\IR}\{\Sigma_1,\Sigma_2,\Sigma_3\}\subseteq\bV_3$ and relative to basis ${\bf B}\setminus\{I/\sqrt{2}\}$; see, e.g., \cite[Exercise B.4]{AubrunSzarek2017} or \cite[p.~54]{CarterSegalMacdonald};
elementary arguments were given in~\cite[Proposition 4.3]{Li-Tsai-Wang-Wong} (with correction $P_{W_1} = [1] \oplus\left[
\begin{smallmatrix}
\cos\varphi_1 & -\sin\varphi_1 \\
\sin\varphi_1 & \cos\varphi_1
\end{smallmatrix}\right]$).
Hence, we see that the
linear maps $L_1$ and $L_2$
 with $\hat{Q}$ and $\hat{P}$ as the operator matrices with respect to $\bf B$
correspond to the linear maps
\[
L_1(A) = U A U^* \quad\text{and}\quad L_2(A) = V A V^*
\]
for some $U, V \in \mathrm{U}(2)$. The conclusion follows.
\end{proof}

We first present a simplified proof of Theorem~\ref{P:unitary} for linear maps. Notice that this completely proves the case of real matrices.

\begin{proof}[Partial proof of Theorem~\ref{P:unitary} for linear maps] Suppose $T$ is a linear bijection such that $T({\mathcal U}_2)\subseteq \IF\,{\mathcal U}_2$.
We may replace it by  $X \mapsto T(I)^{-1}T(X)$ and assume that $T$ is unital.

First observe that a matrix $A \in M_2(\IF)$ is a scalar multiple of a  matrix in ${\mathcal U}_2$ if and only if $A$ is normal with two eigenvalues of equal modulus.  If $\IF = \IR$, this occurs when either $A \in \IR I$ (it has a repeated eigenvalue), $A$ is a trace-zero symmetric matrix (it has distinct real eigenvalues), or $A$ is the sum of a real scalar multiple of $I$ and a skew-symmetric matrix (when the eigenvalues are complex conjugates).  Thus
\begin{equation}\label{RU_2}
	\IR\,{\mathcal U}_2(\IR) =  (\IR I + \IA_2(\IR)) \cup H_2^0(\IR) = \textbf{V}_1 \cup \textbf{V}_2,
\end{equation}
where $H_2^0(\IR)$ denotes the real space of $2 \times 2$ symmetric matrices with trace zero.

If $\IF = \IC$, after a rotation one may assume that the eigenvalues of $A$ are  complex conjugates.  Then $A$ is scalar multiple of a unitary if and only if $A = e^{i\theta} (rI + K)$ for some $\theta, r \in \IR$ and $K \in \IA_2^0(\IC)$, where $\IA_2^0(\IC)$ is the real space of skew-hermitian $2 \times 2$ complex matrices with trace zero.  Thus
\begin{equation}\label{CU_2}
	\IC{\mathcal U}_2(\IC) = \IC (\IR I + \IA_2^0(\IC)) =\IC\,\span_{\IR}\{I,\Sigma_1,\Sigma_2,\Sigma_3\}=\IC\bV_3.
\end{equation}

Now suppose $T\colon M_2(\IF) \to M_2(\IF)$ is a bijective linear map satisfying $T({\mathcal U}_2) \subseteq \IF\,{\mathcal U}_2$.  For the case $\IF = \IR$, because $M_2(\IR) = \textbf{V}_1 \oplus \textbf{V}_2$, $\dim \textbf{V}_1 = \dim \textbf{V}_2 = 2$, $T(I)=I$, and $T$ maps $\textbf{V}_1 \cup \textbf{V}_2$ into itself, we must have
$T(\textbf{V}_j) = \textbf{V}_j$ for $j=1,2$
 as claimed by (i).  Conversely, if (i) holds, then $T$ will leave $\IR  {\mathrm O}_2= {\bf V}_1\cup\bV_2$ invariant. Thus the first assertion holds.

For the case $\IF = \IC$, observe that, for a nonscalar matrix $X$, $X + tI \in \IC\, {\mathcal U}_2$ for all $t \in \IR$ if and only if $X \in \IR I + \IA_2^0(\IC)$.
Since $T$ is unital, bijective,  and also satisfies $T(\IC\, {\mathcal U}_2) \subseteq \IC\, {\mathcal U}_2$,
 we have  $T(\bV_3)=\bV_3$ and item (ii) follows by Lemma~\ref{lem:reduction-2-by-2}. The converse will be proven below.
\end{proof}

\begin{proof}[Proof of Theorem~\ref{P:unitary}] We only need to consider the complex matrices. Suppose $T$ is a (real)linear bijection such that $T({\mathcal U}_2)\subseteq \IC\,{\mathcal U}_2$.
We may replace it by  $X \mapsto T(I)^{-1}T(X)$ and assume that $T$ is unital.
Recall that
$$\bV_3= \{rI + K : r \in \IR, K \in M_2(\IC), \tr K = 0, K^* = -K \}=\span_{\IR}\{I,\Sigma_1,\Sigma_2,\Sigma_3\}.$$
Recall
that $X+tI$
is a multiple of a unitary for all real numbers $t$ if and only if
$X = \alpha I + iH$ for a hermitian matrix $H$   which is of trace-zero or belongs to $\IR I$, and where $\alpha\in\IR$.
This is equivalent to  $X\in\bV_3\cup\IC I$ (a union of two real-linear subspaces). In particular,
since $\bV_3\supseteq\IR I$,
 the (real)linear unital  bijection  $T$  will map the set $\bV_3\cup\IC I$ into itself.
Since  it cannot map the four-dimensional real vector space $\bV_3$ into the two-dimensional real space $\IC I$, it
will  fix $\IR I$  and  will send $\bV_3$ onto itself and likewise will send $\IC I$ onto itself.

By applying the above arguments,
$$T(iI)=\mu I$$ for some nonreal complex number $\mu$.
Let us introduce the auxiliary real-linear bijection $T'\colon X\mapsto T(iI)^{-1}T(X)$ which maps $iI$ into the identity and maps  trace-zero hermitian  $H\in i\bV_3$ into a matrix $T'(H)$ such that $I+t T'(H)=T'(iI+tH)\in T'(i\bV_3)$ is a multiple of unitary for each real $t$. As shown above, this implies that $T'(H)\in\bV_3\cup\IC I$. Then, $T(H)=T(iI)T'(H)\in T(iI)(\bV_3\cup\IC I)=\mu \bV_3\cup\IC I$ and so $T(i\bV_3\cup\IC I)\subseteq\mu\bV_3\cup \IC I$. Again the four-dimensional real space $i\bV_3$ cannot be mapped into the two-dimensional real space $\IC I$, so
\begin{equation}\label{eq:T(iV_3)}
 T(i\bV_3)=\mu \bV_3.
\end{equation}
Recall
that $\bV_3$ is isomorphic to quaternions with  matrices
\[
\Sigma_1 =i(E_{12} + E_{21}) ,\quad \Sigma_2 = (E_{12} - E_{21}),\quad \Sigma_3 = i(E_{11} - E_{22})
\]
serving as quaternionic units. Take then any nonzero
$X\in\bV_3$ and let $U:=T(X)\in\bV_3$ and $V:=T(iX)\in\mu\bV_3$. It follows that
$$U(I+t U^{-1} V)=T((1+it)X)$$
is a multiple of a unitary for every real $t$, and so $U^{-1} V\in\bV_3\cup\IC I$. According to~\eqref{eq:T(iV_3)}, $V=\mu V'$ for some $V'\in\bV_3$, so that
\begin{equation}\label{eq:muUV'}
 \mu U^{-1} V'=U^{-1} V\in\bV_3\cup\IC I.
\end{equation}
Being isomorphic to quaternions, $\bV_3$ is closed under multiplication. Then, $U^{-1} V'\in\bV_3$, and since $\mu\in\IC\setminus\IR$ (i.e., $\bar{\mu}-\mu\neq0$)  we get from~\eqref{eq:muUV'} that $U^{-1} V\in({\bf V}_3\cup\IC I)\cap \mu{\bf V}_3=\IC I$. This, in combination with~\eqref{eq:T(iV_3)}, shows that
$$T(iX)=V=\lambda U=\lambda T(X);\qquad \lambda=\lambda_X\in\mu\IR\setminus\{0\}.$$
It is a standard result with real-linear bijections that the scalar $\lambda$ cannot depend on the choice of $X\in\bV_3$, and since $T(iI)=\mu I$ we get
$$T(iX)=\mu T(X);\qquad X\in i\bV_3,$$
and the form~(ii) again follows from Lemma~\ref{lem:reduction-2-by-2}.

To prove the converse let us first show that
\begin{equation}\label{eq:unitary-in-2-by-2}
A\in M_2(\IC)\hbox{ is a multiple of a unitary \ \ \ \  if and only if \ \  \ \ } A\in\alpha\bV_3\hbox{ for some scalar $\alpha$.}
\end{equation} In fact, a trace-zero normal $A$ is unitarily  similar to a diagonal matrix $\alpha\left[\begin{matrix}
  i &0\\
  0& -i                                                                                                                                                                     \end{matrix}\right]\in\alpha\bV_3$ where $\pm i\alpha$ are its eigenvalues, while a normal $A$ with nonzero trace and unimodular eigenvalues $\lambda_1=e^{i\theta_1}$ and $\lambda_2=e^{i\theta_2}$ is unitarily similar to
  $$\tfrac{\lambda_1+\lambda_2}{2}\left(I+\tfrac{\lambda_1-\lambda_2}{i(\lambda_1+\lambda_2)}\left[\begin{matrix}
  i &0\\
  0& -i                                                                                                                                                                     \end{matrix}\right]\right)=\tfrac{\lambda_1+\lambda_2}{2}\left(I+(\tan\tfrac{\theta_1-\theta_2}{2})\left[\begin{matrix}
  i &0\\
  0& -i                                                                                                                                                                     \end{matrix}\right]\right)\in(\tfrac{\lambda_1+\lambda_2}{2})\bV_3,$$ and the claim follows since $\bV_3$ is clearly closed under unitary similarity.

  It then follows that every unitary  $A$ is mapped by a real-linear unital $L'$
   into
  \begin{equation*}
   \begin{aligned}
   L'(A)&=L'(\mathrm{Re}(\alpha)X+\mathrm{Im}(\alpha)iX)=\mathrm{Re}(\alpha)L'(X)+\mathrm{Im}(\alpha)\mu L'(X)\\
   &=\bigl(\mathrm{Re}(\alpha)+\mathrm{Im}(\alpha)\mu)L'(X)\in\bigl(\mathrm{Re}(\alpha)+\mathrm{Im}(\alpha)\mu)\bV_3;\qquad X:=\tfrac{1}{\alpha}A\in\bV_3
  \end{aligned}
  \end{equation*}
so $L'$ maps unitary matrices into scalar multiples of unitaries.
\end{proof}

\subsection{Matrix pairs with product attaining maximum norm value}
The main result here is the following.
\begin{theorem}\label{n=2}
A bijective real-linear map $T\colon M_2(\IF) \rightarrow M_2(\IF)$ satisfies
$$\|T(A)T(B)\| = \|T(A)\| \|T(B)\| \qquad \hbox{ whenever } \qquad \|AB\| = \|A\| \|B\|$$
if and only if one of the following holds.
\begin{itemize}
\item[{\rm (a)}] $\IF = \IR$,
there exist a nonzero $\gamma \in \IR$, $c>0$, and $U \in {\mathcal U}_2(\IR)$ such that $T(A) = \gamma U \Phi_c(A) U^*$ for all $A \in M_2(\IR)$.
Here $\Phi_c\colon  M_2(\IR) \to M_2(\IR)$ is a linear bijection  defined by
$$\Phi_c \left( \begin{bmatrix} a & b \\ -b & a \end{bmatrix} \right) = \begin{bmatrix} a & b \\ -b & a \end{bmatrix}, \quad \Phi_c \left( \begin{bmatrix} a & b \\ b & -a \end{bmatrix} \right) = c \begin{bmatrix} a & b \\ b & -a \end{bmatrix}$$
for all $a,b \in \IR$.
\item[{\rm (b)}] $\IF = \IC$,
 there exist  $U,V\in {\mathcal U}_2(\IC)$, $\gamma > 0$
 and $\mu\in \IC\setminus \IR $
 such that the real-linear map $L\colon X\mapsto \gamma VT(UXU^\ast)V^\ast$ has the form
$ (X+iY) \mapsto X + \mu Y$  for  $X, Y \in \bV_3$. If $T$ is complex linear, then $\mu =
i$.
\end{itemize}
\end{theorem}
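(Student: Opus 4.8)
The strategy is to mimic the proof of Theorem~\ref{main2}, but with Theorem~\ref{P:unitary} in place of Theorem~\ref{thm:main1}, and then to work considerably harder in the normalization phase because Theorem~\ref{P:unitary} leaves more free parameters.

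The sufficiency is a direct computation. For $\IF=\IR$ it rests on the observation that if $M_2(\IR)=\bV_1\oplus\bV_2$ and $A=A_1+A_2$ is the corresponding splitting, then the two singular values of $A$ are $\|A_1\|\pm\|A_2\|$ up to sign, so $\|A\|=\|A_1\|+\|A_2\|$; hence $\Phi_c$ merely rescales one graded summand, and together with Lemma~\ref{L:basic}(i) this gives that $\Phi_c$ (hence $A\mapsto rU\Phi_c(A)U^\ast$) preserves norm-multiplicative pairs. For $\IF=\IC$ one checks that $X+iY\mapsto X+\mu Y$ carries multiples of unitaries to multiples of unitaries and preserves the singular-vector alignment criterion characterizing $\|AB\|=\|A\|\|B\|$ in $M_2$ (equivalently, one can verify this on the explicit form $A\mapsto\tfrac{1+i\mu}{2}\,\overline{\tr A}\,I+\tfrac{1-i\mu}{2}A-\tfrac{1+i\mu}{2}A^\ast$).

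For the necessity, I would first reduce to a unitary preserver exactly as for Theorem~\ref{main2}: if $A\in{\mathcal U}_2$ then $\|AB\|=\|A\|\|B\|$ for all $B$ by Lemma~\ref{lam1}, so $\|T(A)T(B)\|=\|T(A)\|\|T(B)\|$ for all $B$ and hence $T(A)\in\IF\,{\mathcal U}_2$; thus $T({\mathcal U}_2)\subseteq\IF\,{\mathcal U}_2$ and Theorem~\ref{P:unitary} fixes the global shape of $T$. Everything after that is normalization (conjugating $T$ by a unitary and scaling both preserve the hypothesis). I would first standardize the relevant singular-value datum --- in the real case, conjugate so that $T(E_{11}-E_{22})$ is a real multiple of $E_{11}-E_{22}$ --- and then prove $T(I)\in\IF I$ by testing the pair $(E_{11},E_{11})$, using $\|E_{11}^2\|=\|E_{11}\|^2=1$: the resulting $2\times2$ spectral-norm identity collapses, in the real case, to $\cos\alpha_0=\pm1$ for $T(I)=\gamma R_{\alpha_0}$, and in the complex case forces the unitary part of $T(I)$ to be diagonal, after which a second rank-one idempotent with non-standard eigenvectors upgrades ``diagonal'' to ``scalar'' (here $\mu\notin\IR$ is exactly what keeps the two relevant eigenvalue moduli distinct). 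Rescaling, $T$ is unital, and Theorem~\ref{P:unitary} applied to the unital map gives: for $\IF=\IR$, $T(\bV_1)=\bV_1$ and $T(\bV_2)=\bV_2$ (since $I\in\bV_1$), so $T(J)=tI+sJ$, $T(N)=c_1K+c_2N$, $T(K)=\lambda K$; for $\IF=\IC$, after a further conjugation $T=L'$ with $L'(\Sigma_j)=a_j\Sigma_j+b_jI$, $L'(iI)=\mu I$, $L'(i\Sigma_j)=\mu L'(\Sigma_j)$ and $a_1\ge a_2\ge|a_3|>0$.

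The final step is to feed in a short list of concrete norm-multiplicative pairs and read off the parameters. In the real case: the ``swap'' possibility $T(\bV_1)=\bV_2$ is eliminated by a pair of rank-one matrices sharing their middle vector, e.g.\ $\bigl(\bigl[\begin{smallmatrix}1&1\\0&0\end{smallmatrix}\bigr],\bigl[\begin{smallmatrix}1&0\\1&0\end{smallmatrix}\bigr]\bigr)$, whose product has full norm while the image product is forced to be $0$; the pair $(E_{12},E_{21})$ forces $t=0$; the pair $(E_{11},E_{12})$ forces $c_1=0$ and $\operatorname{sign}c_2=\operatorname{sign}s$; and two further pairs built from rank-one projections (e.g.\ $\bigl(\tfrac12(I+N),\tfrac1{\sqrt2}(E_{11}+E_{21})\bigr)$) pin the magnitudes to $|s|=1$, $|c_2|=\lambda$. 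Conjugating by $E_{11}-E_{22}$ if necessary brings this to $T=\Phi_\lambda$, and the transposition form is excluded because $\|E_{11}E_{12}\|=\|E_{11}\|\|E_{12}\|$ while $\|E_{11}^{T}E_{12}^{T}\|=\|E_{11}E_{21}\|=0$. In the complex case the analogous pairs ($(E_{11},E_{12})$, $(E_{12},E_{21})$, and a projection pair) force $a_1=a_2=a_3=1$ and $b_1=b_2=b_3=0$, i.e.\ $L'(X+iY)=X+\mu Y$. I expect this last step --- converting the explicit norm equations into the precise parameter values while keeping the several sign/ordering cases of Theorem~\ref{P:unitary} under control --- to be the main obstacle; each individual equation is an elementary spectral-norm computation (conveniently $\|\bigl[\begin{smallmatrix}a&b\\c&d\end{smallmatrix}\bigr]\|=\tfrac12\bigl(\sqrt{(a+d)^2+(b-c)^2}+\sqrt{(a-d)^2+(b+c)^2}\bigr)$), but organizing which pairs to use, and in which order, so that the case analysis does not proliferate is the delicate point.
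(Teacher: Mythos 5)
Your architecture coincides with the paper's: reduce to $T({\mathcal U}_2)\subseteq\IF\,{\mathcal U}_2$ via Lemma~\ref{lam1}, invoke Theorem~\ref{P:unitary}, normalize to a unital map, and then pin down the surviving parameters by testing explicit norm-multiplicative pairs, with sufficiency done by direct computation. Two choices genuinely differ from the paper's. First, you derive $T(I)\in\IF I$ from normality of $T(E_{11})$ (the pair $(E_{11},E_{11})$ plus Ptak/Goldberg--Zwas) combined with the structure from Theorem~\ref{P:unitary}, whereas the paper's Lemma~\ref{L:unital} proves $T(\IF I)=\IF I$ self-containedly by showing that $\Span_{\IR}P(A)=M_2(\IF)$ exactly when $A\in\IF I$, where $P(A)$ is the set of normal $B$ with $A+B$ normal. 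Your route is shorter once Theorem~\ref{P:unitary} is available, but note that in the real case $T(I)$ could a priori be a multiple of a \emph{reflection} rather than a rotation $R_{\alpha_0}$; the single pair $(E_{11},E_{11})$ then only forces $L(E_{11}-E_{22})$ into a one-dimensional set, and you need a second idempotent (as you already use in the complex case) to finish. Second, your test pairs differ from the paper's families $(X,E_{11})$ and $(X_t,C)$ but do work, with one caveat: $(E_{11},E_{12})$ by itself only yields $(s,t)\parallel(c_2,c_1)$ together with $sc_2+tc_1\ge 0$, so it forces $c_1=0$ only \emph{after} $(E_{12},E_{21})$ has given $t=0$; the order in your list is therefore essential, not cosmetic.

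The thinnest spot is the complex-case sufficiency. ``Preserves the singular-vector alignment criterion'' is precisely the statement to be proven, and in the paper it is the longest computation of the section: one exhibits a generalized SVD of $T(A)$ sharing its unitary factors with that of $A$ and checks that $|d_{11}|^2-|d_{22}|^2=(1-t^2)\mathrm{Im}(\mu)$, so that norm-attaining vectors are either all preserved or all rotated by $90^\circ$ according to the sign of $\mathrm{Im}(\mu)$. Your closed formula for the map is correct, but this verification cannot be waved at. Similarly, in the real case the identity $\|A\|=\|A_1\|+\|A_2\|$ gives preservation of norm-attaining vectors, but before Lemma~\ref{L:basic}(i) applies you still need that $Bv$ and $\Phi_c(B)v$ are parallel (the paper's Step~2 of Lemma~\ref{L:special-map}); this follows from the same eigenvector analysis of $U^TQ$ but must be said.
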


It is interesting to note that the linear preservers of norm multiplicative pairs on complex $2$-by-$2$ matrices behave as with $n\ge3$.

We need some technical lemmas to prove this result.
We first show that preservers of norm-multiplicative pairs will send $\IF I$ to $\IF I$.

\begin{lemma}\label{L:unital}
 Suppose a bijective real-linear  map $T\colon M_2(\IF) \rightarrow M_2(\IF)$ satisfies
$$\|T(A)T(B)\| = \|T(A)\| \|T(B)\| \qquad \hbox{ whenever } \qquad \|AB\| = \|A\| \|B\|.$$
Then  $T(\IF I) = \IF I$.
\end{lemma}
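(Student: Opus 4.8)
The plan is to exploit the characterization in Lemma~\ref{L:basic}(i): a pair $(A,B)$ satisfies $\|AB\|=\|A\|\|B\|$ exactly when there is a unit vector $x$ with $B^*Bx=\|B\|^2x$ and $A$ norm-attaining at $Bx$. The key observation is that $\IF I$ is distinguished among $2\times2$ matrices by a norm-multiplicativity property that is preserved by $T$. Concretely, by Lemma~\ref{lam1}, $\gamma A\in\mathcal U_2$ for some $\gamma$ if and only if $\|AB\|=\|A\|\|B\|$ for \emph{every} $B$; since a scalar matrix is in particular a multiple of a unitary, the argument $A=\lambda I$ forms a norm-multiplicative pair with every $B\in M_2(\IF)$, hence $T(\lambda I)$ forms a norm-multiplicative pair with $T(B)$ for every $B$, and by surjectivity of $T$ with every matrix in $M_2(\IF)$; Lemma~\ref{lam1} then gives $T(\lambda I)\in\IF\,\mathcal U_2$. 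So $T(\IF I)\subseteq\IF\,\mathcal U_2$. It remains to pin this down to $\IF I$ itself and, more importantly, to rule out $T$ mapping some \emph{non}-scalar matrix into $\IF I$.

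First I would show $\IF I\subseteq T(\IF I)$, equivalently that no non-scalar $C$ has $T(C)\in\IF I$. Suppose $T(C)=\lambda I$ with $C$ non-scalar; we may rescale so $\lambda\ne 0$. Then $T(C)$ forms a norm-multiplicative pair with every matrix, so (using surjectivity, and then applying the hypothesis in the direction read off from $T^{-1}$ — note $T^{-1}$ satisfies the same kind of implication once we know $T$ preserves the \emph{set} $\IF\mathcal U_2$, by Theorem~\ref{P:unitary}, hence $T^{-1}$ too preserves it) we need $C$ to form a norm-multiplicative pair with a large family of matrices. The cleanest route is: since $T(C)=\lambda I$ is a multiple of a unitary, $C=T^{-1}(\lambda I)$ must be a multiple of a unitary by the already-established fact that $T$ (being a bijective preserver of norm-multiplicative pairs) maps $\IF\mathcal U_2$ into itself, hence onto itself by a dimension count — so $C=\mu U$ with $U\in\mathcal U_2$. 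Now I use Lemma~\ref{L:basic}(i) more delicately: a multiple of a unitary $C=\mu U$ has $C^*C=|\mu|^2 I$, so $C$ is norm-attaining at \emph{every} unit vector; thus $(A,C)$ is norm-multiplicative for every $A$, i.e. $\|AC\|=\|A\|\|C\|$ for all $A$, which by the hypothesis applied with the roles $(A,B)=(A,C)$ forces $\|T(A)T(C)\|=\|T(A)\|\|T(C)\|$, i.e. $\|T(A)\lambda I\|=\|T(A)\|\,|\lambda|$ — automatically true, no information. So instead I consider pairs $(C,B)$ with $C$ on the \emph{left}: $\|CB\|=\|C\|\|B\|$ for all $B$ as well (multiples of unitaries also absorb on the left), giving $\|\lambda I\cdot T(B)\|=|\lambda|\|T(B)\|$, again vacuous. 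Hence the non-scalar $\mu U$ must be ruled out by a different feature.

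The right move is to compare $C=\mu U$ (non-scalar unitary multiple) with a well-chosen second argument so that norm-multiplicativity on one side fails while it would be forced on the image side. Fix a unit eigenvector $x$ of $U$ and a unit vector $w$ on which $U$ is \emph{not} norm-attaining in the weak sense that $|w^*Uw|<\|U\|=1$ (possible since $U$ is non-scalar unitary). Then $A=ww^*$ and $B=ww^*$ satisfy $\|AB\|=\|A\|\|B\|=1$, while for the image we would need $\|T(A)T(B)\|=\|T(A)\|\|T(B)\|$; running this through the normal form of $T$ from Theorem~\ref{P:unitary} — already available — yields that $T$ restricted away from $\IF I$ must behave like $X\mapsto WX$ or $WX^T$ composed with the maps $\Phi_c$ or the $(X+iY)\mapsto X+\mu Y$ construction, none of which sends a rank-one $ww^*$ into $\IF I$. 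So the hard part, and the step I expect to be the main obstacle, is organizing the interplay between Lemma~\ref{L:basic}(i) and the structural classification so as to exclude $T(C)\in\IF I$ for non-scalar $C$ \emph{without} circular reliance on the final form of $T$; I would resolve it by the pure norm-multiplicativity argument above (pairs $(ww^*, ww^*)$ and $(xx^*, ww^*)$ with $x\perp w$, noting $\|xx^*\cdot ww^*\|=|x^*w|=0\ne 1$ gives a non-norm-multiplicative pair whose image must stay non-norm-multiplicative, constraining $T(xx^*)$ and $T(ww^*)$), which localizes the obstruction to rank-one matrices and then contradicts surjectivity of $T$ onto a neighborhood of $\IF I$. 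This gives $T(\IF I)=\IF I$.
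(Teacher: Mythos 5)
Your opening step --- that $\lambda I$ forms a norm-multiplicative pair with every $B$, so by surjectivity and Lemma~\ref{lam1} one gets $T(\IF I)\subseteq \IF\,{\mathcal U}_2(\IF)$ --- is correct, but it is only the easy part. The entire content of the lemma is to distinguish $\IF I$ \emph{inside} $\IF\,{\mathcal U}_2(\IF)$, and the steps you offer for that do not hold up. First, your reduction ``$(xx^*,ww^*)$ is a non-norm-multiplicative pair whose image must stay non-norm-multiplicative'' invokes the converse of the hypothesis: the assumption is only the one-way implication $\|AB\|=\|A\|\|B\|\Rightarrow\|T(A)T(B)\|=\|T(A)\|\|T(B)\|$, and nothing established so far lets you transfer it to $T^{-1}$ (your parenthetical claim that $T^{-1}$ inherits the implication ``once we know $T$ preserves the set $\IF\,{\mathcal U}_2$'' is a non sequitur --- preserving the cone of unitary multiples is much weaker than preserving norm-multiplicative pairs). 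Second, your appeal to ``the normal form of $T$ \dots $\Phi_c$ or the $(X+iY)\mapsto X+\mu Y$ construction'' is circular: those forms are the conclusions of Proposition~\ref{P:n=2,real}, Lemma~\ref{lem:aux2-by-2-reallinear} and Theorem~\ref{n=2}, whose proofs all begin by citing the present lemma to reduce to the unital case. What Theorem~\ref{P:unitary} actually provides is far weaker --- it constrains $L=T(I)^{-1}T$ only on $\bV_1,\bV_2$ (resp.\ $\bV_3\cup i\bV_3$) and, as the Remark after it stresses, says nothing about where rank-one matrices such as $ww^*$ go. In particular it cannot rule out behaviour like $T(X)=WX$ with $W$ a fixed non-scalar unitary, which preserves ${\mathcal U}_2$, sends the non-scalar $W^{-1}$ to $I$, and sends $I$ to $W\notin\IF I$; excluding this is precisely what the lemma must accomplish, and no step of your argument that is actually justified does so.

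For comparison, the paper's proof uses the hypothesis only in its legitimate direction, via self-pairs: $\|A^2\|=\|A\|^2$ iff $A$ is normal, so $T$ maps the set $\cN$ of normal matrices into itself; setting $P(A)=\{B\in\cN:A+B\in\cN\}$, one shows $\Span_\IR P(A)=M_2(\IF)$ exactly when $A\in\IF I$ (for non-scalar normal $A$ the compatibility condition \eqref{eq:skew} confines $P(A)$ to a proper subspace). Since $T(\cN)\subseteq\cN$ gives $T(P(cI))\subseteq P(T(cI))$, a non-scalar value $T(cI)$ would force $M_2(\IF)=\Span T(P(cI))$ into a proper subspace, a contradiction. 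You would need an ingredient of this kind --- some invariant, computed from norm-multiplicative pairs in the forward direction only, that separates $\IF I$ from the rest of $\IF\,{\mathcal U}_2(\IF)$ --- and your proposal does not supply one.
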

\begin{proof}
  Note that
$T$ preserves
$$\cN = \{ A \in M_2(\IF) : \|A\|^2 = \|A^2\|\}$$
which, by  \cite[Theorem~2.1]{Ptak} and \cite{GZ},
 is the set of normal matrices.
For $A \in \cN$, let
$$P(A) = \{B \in \cN: A +  B \in \cN \}.$$
We claim that   $\Span_{\IR} P(A) = M_2(\IF)$ if and only if $A \in\IF  I$. In fact, if $A\in\IF I$, then $P(A)$ is the set of all normal matrices and its span is $M_2(\IF)$ (namely, hermitian and skew-hermitian matrices are normal). Conversely,  if $A$ is not a scalar matrix, then $N\in P(A)$ if and only if $N\in\cN$ is normal and $(A+ N)^\ast (A+ N)=(A+ N) (A+ N)^\ast$. Expanding and rearranging gives that the normal matrices $A,N$ must satisfy
 \begin{equation}\label{eq:skew}
NA^\ast-A^\ast N = -(NA^\ast-A^\ast N)^\ast.
\end{equation}
Assume $\IF=\IC$. With no loss of generality,  let
$A=\diag(\lambda,\mu)$ be diagonal with distinct eigenvalues. Then~\eqref{eq:skew} is equivalent to a (normal) matrix $N$ satisfying $n_{21}=\overline{n_{12}} \frac{(\lambda-\mu)^2}{|\lambda-\mu|^2}$. The real span of such matrices $N$ is contained in the kernel of the real-linear functional $X=(x_{ij})\mapsto x_{21}-\overline{x_{12}}\frac{(\lambda-\mu)^2}{|\lambda-\mu|^2}$ and so is a proper real-linear subspace of $M_2(\IC)$.

Assume $\IF=\IR$.  Here,~\eqref{eq:skew} is equivalent to the fact that $NA^\ast-A^\ast N$ is a real skew-hermitian matrix in the range  of the elementary operator ${\bf T}_{A^\ast}\colon X\mapsto XA^\ast-A^\ast X$. Since $A^\ast$ is normal with  distinct eigenvalues, $\ker {\bf T}_{A^\ast} = \span\{I,A^\ast\}$ has dimension two.  By the Rank Theorem, the range of $T$ has dimension two.  On the other hand, the set of $2$-by-$2$ real skew-hermitian matrices is the one-dimensional space spanned by $J=E_{12}-E_{21}$.
 Thus $P(A)$ consists of normal matrices which ${\bf T}_{A^\ast}$ maps into $\IR J$, or equivalently, of normal matrices inside ${\bf T}_{ A^\ast}^{-1}(\IR J)$, which is an at most three-dimensional, and hence proper, subspace of $M_2(\IR)$.

Now if, for some $c\in\IF$,  $T(cI) = A$ is not a multiple of $I$, then
$$M_2(\IF) = T(M_2(\IF)) = T(\Span P(cI)) = \Span T(P(cI)) \subseteq \Span P(T(cI)) = \Span P(A)$$ is a proper subspace of $M_2(\IF)$,
a contradiction.
\end{proof}

We consider the case $\IF = \IR$ first.

\begin{lemma}\label{L:special-map}
	Fix $c>0$.  Define a linear map $\Phi_c\colon  M_2(\IR) \to M_2(\IR)$ as in Theorem~\ref{n=2}{\rm (a)}.
	 	Then
	$$\|\Phi_c(A) \Phi_c(B) \| = \|\Phi_c(A)\| \|\Phi_c(B)\| \; \text{ whenever } \; \|AB\| = \|A\| \|B\|,$$
	and
	$$\|\Phi_c(A) \Phi_c(B)^* \| = \|\Phi_c(A)\| \|\Phi_c(B)^* \| \; \text{ whenever } \; \|AB^*\| = \|A\| \|B^*\|.$$
\end{lemma}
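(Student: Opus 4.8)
The strategy is to reduce both implications to Lemma~\ref{L:basic} by checking that $\Phi_c$ leaves unchanged the geometric data governing that lemma: the set of unit vectors at which a matrix attains its norm, and the ray spanned by the image of such a vector. Set $\bV_1 = \span_\IR\{I_2,\,E_{12}-E_{21}\}$ and $\bV_2 = \span_\IR\{E_{11}-E_{22},\,E_{12}+E_{21}\}$, so that $M_2(\IR)=\bV_1\oplus\bV_2$. Each nonzero matrix in $\bV_1$ equals $\rho O$ with $\rho>0$ and $O\in\mathrm{SO}(2)$, and each nonzero matrix in $\bV_2$ equals $\rho O$ with $\rho>0$ and $O$ a reflection, i.e.\ a symmetric orthogonal matrix with $\det O=-1$; by construction $\Phi_c$ fixes $\bV_1$ pointwise and multiplies $\bV_2$ by $c$.

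The heart of the matter is the claim that for every nonzero $M\in M_2(\IR)$: \textbf{(O1)} $M$ and $\Phi_c(M)$ attain their norms at exactly the same set of unit vectors; and \textbf{(O2)} if $x$ is such a vector, then $\Phi_c(M)x$ is a nonzero scalar multiple of $Mx$. Writing $M=M_1+M_2$ with $M_i\in\bV_i$, the cases $M_2=0$ (then $\Phi_c(M)=M$) and $M_1=0$ (then $\Phi_c(M)=cM$) are immediate. Otherwise $M_1=\rho_1 O_1$ and $M_2=\rho_2 O_2$ with $\rho_1,\rho_2>0$, and the decisive observation is that $O_1^{-1}O_2$ has determinant $-1$, hence is again a reflection, with orthonormal eigenvectors $f_+,f_-$ for the eigenvalues $1,-1$. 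A direct computation gives
$$M^*M=(\rho_1^2+\rho_2^2)I+2\rho_1\rho_2\,O_1^{-1}O_2,\qquad \Phi_c(M)^*\Phi_c(M)=(\rho_1^2+c^2\rho_2^2)I+2c\rho_1\rho_2\,O_1^{-1}O_2.$$
Since the coefficient of $O_1^{-1}O_2$ is positive in both, $f_+$ is the unique (up to sign) unit eigenvector for the top eigenvalue of each — namely $(\rho_1+\rho_2)^2$, respectively $(\rho_1+c\rho_2)^2$, both simple because $\rho_1,\rho_2>0$ — so both matrices attain their norm exactly on $\{\pm f_+\}$, which proves (O1). Moreover $O_1^{-1}O_2 f_+=f_+$ rearranges to $O_2 f_+=O_1 f_+$, whence $Mf_+=(\rho_1+\rho_2)O_1 f_+$ and $\Phi_c(M)f_+=(\rho_1+c\rho_2)O_1 f_+$ are positive multiples of each other; this gives (O2) (and likewise with $-f_+$).

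Granting (O1)--(O2), both assertions follow from Lemma~\ref{L:basic} (the case $A=0$ or $B=0$ being trivial). If $\|AB\|=\|A\|\|B\|$, pick by Lemma~\ref{L:basic}(i) a unit vector $x$ with $B^*Bx=\|B\|^2 x$ such that $A$ attains its norm at $Bx$ (note $Bx\neq0$ since $B\neq0$); by (O1), $x$ is likewise a top eigenvector of $\Phi_c(B)^*\Phi_c(B)$, by (O2) the vector $\Phi_c(B)x$ is a nonzero multiple of $Bx$, and by (O1) applied to $A$, $\Phi_c(A)$ attains its norm at $Bx$, hence also at $\Phi_c(B)x$; Lemma~\ref{L:basic}(i) then yields $\|\Phi_c(A)\Phi_c(B)\|=\|\Phi_c(A)\|\|\Phi_c(B)\|$. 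Similarly, if $\|AB^*\|=\|A\|\|B^*\|$, a unit vector norm-attaining for both $A$ and $B$ is, by (O1), norm-attaining for both $\Phi_c(A)$ and $\Phi_c(B)$, and Lemma~\ref{L:basic}(ii) gives $\|\Phi_c(A)\Phi_c(B)^*\|=\|\Phi_c(A)\|\|\Phi_c(B)^*\|$. I expect no genuine obstacle here: the only substantive point is the structural observation that in $M=\rho_1O_1+\rho_2O_2$ the reflection $O_1^{-1}O_2$ simultaneously pins down the norm-attaining direction of $M$ and of $\Phi_c(M)$ and sends it, up to positive scale, into one common ray under both maps; everything else is bookkeeping. (The same argument proves the two converse implications as well, though only the stated directions are needed.)
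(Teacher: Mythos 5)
Your proposal is correct and follows essentially the same route as the paper: decompose $M$ along $\bV_1\oplus\bV_2$ as a positive combination of a rotation and a reflection, compute $M^*M=(\rho_1^2+\rho_2^2)I+2\rho_1\rho_2\,O_1^{-1}O_2$ to see that the norm-attaining direction is the $+1$-eigenvector of the reflection $O_1^{-1}O_2$ and is unchanged under $\beta\mapsto c\beta$, check that $Mf_+$ and $\Phi_c(M)f_+$ are proportional, and conclude via Lemma~\ref{L:basic}. Your normalization $\rho_1,\rho_2>0$ (absorbing signs into the orthogonal factors) is a minor cosmetic cleanup of the paper's bookkeeping with $\mathrm{sign}(\alpha\beta)$, but the argument is the same.
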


\begin{proof}
Notice that $\Phi_c$ is the identity on $\bV_1=\span \{I_2, E_{12}-E_{21}\}$  and a scaling  by~$c$ on  $\bV_2=\span\{E_{11}-E_{22}, E_{12}+E_{21}\}$  ($\bV_1,\bV_2$ are as in Theorem~\ref{P:unitary}).
Since $\IR\,{\mathcal U}_2(\IR)=\bV_1\cup\bV_2$,  $\Phi_c$ maps orthogonal matrices to scalar multiples of orthogonal matrices, so if either $A$ or $B$ is a multiple of an orthogonal matrix, the assertions hold.  So we henceforth assume that neither $A$ nor $B$ is a multiple of an orthogonal matrix, i.e., they each have distinct singular values.

	\textbf{Step 1:} We start by proving that if $B$ attains its norm at $v$, then so does $\Phi_c(B)$.

	Relative to $M_2(\IR)=\textbf{V}_1\oplus \textbf{V}_2$
	 	we may decompose
	\[
	B = \alpha Q + \beta U,
	\]
	where \( Q \in \mathrm{SO}(2) \) is a rotation and  $U\in \mathrm{O}(2)$ is a reflection.  	Because $B$ is not a multiple of an orthogonal matrix, $\alpha \beta \ne 0$.  Notice that
	\[
	B^T B = (\alpha Q + \beta U)^T (\alpha Q + \beta U) = (\alpha^2 + \beta^2) I +  \alpha \beta (Q^TU+U^T Q).
	\]
	But \( Q^T U \in \mathrm{O}(2) \) and \( \det(Q^T U) = -1 \), so \( Q^T U \) is a reflection, hence an involution. Therefore, $(Q^T U)(Q^T U)=I$ which gives
	\[ Q^T U = U^T Q
	\]
	and  then
	\begin{equation}\label{eq:B-and-T(B)1}
		B^T B = (\alpha^2 + \beta^2) I + 2 \alpha \beta U^TQ.
	\end{equation}
	Now,
	\( B \) attains the norm at \( v \) if and only if  \( B^T B v = \|B\|^2 v \) or, equivalently, \( v \) is an eigenvector of \(2\alpha\beta U^T Q \), corresponding to the largest eigenvalue. Notice that
	\[ \Phi_c(B) = \alpha Q + c \beta U ,\] so replacing \( \beta \) with \( c\beta \) in~\eqref{eq:B-and-T(B)1}, and keeping in mind that $c>0$, shows that \( \Phi_c(B) \) also attains its norm at \( v \).  This proves Step 1.

	Consequently, if $A$ and $B$ both attain their norms at $v$, then so do $\Phi_c(A)$ and $\Phi_c(B)$.  The second assertion of this lemma follows by Lemma~\ref{L:basic}.
	\smallskip

	\textbf{Step 2:} To prove the first assertion of this lemma we now show that if $B$, $\Phi_c(B)$ attain their norms at $v$, then $Bv$ and $\Phi_c(B)v$ are parallel.

	Recall that $v$ is an eigenvector of $W=Q^TU=U^T Q$, say to eigenvalue~$\lambda > 0$, and write
	\[
	Bv = (\alpha Q + \beta Q Q^T U)v = Q(\alpha I + \beta W)v= Q(\alpha v + \beta \lambda v) = (\alpha + \beta \lambda) Qv,
	\]
	Then
	\[
	\Phi_c(B)v = Q(\alpha I + c \beta W)v = (\alpha + c \beta \lambda) Qv.
	\]
	is clearly parallel to $Bv$.

	Now suppose $B$ attains its norm at a unit vector $v$ and $A$ attains its norm at $Bv$.  By Step 1, $\Phi_c(B)$ attains its norm at $v$ and $\Phi_c(A)$ attains its norm at $Bv$, which is parallel to $\Phi_c(B)v$ by Step~2, so the first assertion of this lemma follows from Lemma~\ref{L:basic}.
\end{proof}

\begin{proposition}\label{P:n=2,real}
	Suppose $T\colon M_2(\IR) \rightarrow M_2(\IR)$ is a unital linear bijection satisfying either of the identities
	\begin{itemize}
		\item[{\rm (i)}] $\|T(A)T(B)\| = \|T(A)\| \|T(B)\|$ whenever $\|AB\| = \|A\| \|B\|$, or the identity
		\item[{\rm (ii)}] $\|T(A)T(B)^*\| = \|T(A)\| \|T(B)^*\|$ whenever $\|AB^*\| = \|A\| \|B^*\|$.
	\end{itemize}
	Then there exists $c>0$ and $U \in {\mathcal U}_2(\IR)$ such that $T(A) = U \Phi_c(A) U^*$ for all $A \in M_2(\IR)$, where $\Phi_c\colon M_2(\IR) \rightarrow M_2(\IR)$ is as in Theorem~\ref{n=2} {\rm (a)}.
 \end{proposition}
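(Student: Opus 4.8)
First, a reduction to the block structure of Theorem~\ref{P:unitary}. Every $A\in{\mathcal U}_2(\IR)$ satisfies $\|AB\|=\|A\|\,\|B\|$ and $\|AB^*\|=\|A\|\,\|B^*\|$ for all $B$ by Lemma~\ref{lam1}, so hypothesis (i) or (ii) together with the surjectivity of $T$ gives $\|T(A)C\|=\|T(A)\|\,\|C\|$ for all $C$, whence $T(A)\in\IR\,{\mathcal U}_2(\IR)$, again by Lemma~\ref{lam1}. As $T$ is unital, Theorem~\ref{P:unitary}(i) yields $T(\bV_1)=\bV_1$ and $T(\bV_2)=\bV_2$, where $\bV_1=\span\{I,J\}$ with $J=E_{12}-E_{21}$ is the space of scaled rotations and $\bV_2=\span\{E_{11}-E_{22},\,E_{12}+E_{21}\}$ that of scaled reflections; on each $\bV_j$ the spectral norm equals the Euclidean norm in these coordinates. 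Writing $A=A_1+A_2$ with $A_j\in\bV_j$, one has $T(A)=T|_{\bV_1}(A_1)+T|_{\bV_2}(A_2)$ with $T|_{\bV_1},T|_{\bV_2}$ linear bijections of $\bV_1,\bV_2$ and $T|_{\bV_1}(I)=I$; moreover $A$ is a scalar multiple of an orthogonal matrix exactly when $A_1=0$ or $A_2=0$, and otherwise has two distinct singular values.

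Second, the geometry of norm-attaining directions. For $A$ with both components nonzero, write $A=pR+qF$ with $p,q>0$, $R$ a rotation and $F$ a reflection; then $A^*A=(p^2+q^2)I+2pq\,G$ with $G:=R^{-1}F$ a reflection, so the norm-attaining line $\ell(A)$ of $A$ is the $(+1)$-eigenline of $G$, and $A\,\ell(A)=R\,\ell(A)=:\ell'(A)$. Parametrizing the rotational part of $A$ by an angle $\phi_A$ and the reflection part by an angle $\psi_A$ (with fixed line at angle $\psi_A/2$) gives $\ell(A)$ at angle $\tfrac12(\psi_A-\phi_A)$ and $\ell'(A)$ at angle $\tfrac12(\psi_A+\phi_A)$. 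By Lemma~\ref{L:basic}, for such $A,B$ one has $\|AB^*\|=\|A\|\,\|B\|$ iff $\ell(A)=\ell(B)$, i.e. $\psi_A-\phi_A\equiv\psi_B-\phi_B\pmod{2\pi}$, and $\|AB\|=\|A\|\,\|B\|$ iff $\ell'(B)=\ell(A)$, i.e. $\psi_B+\phi_B\equiv\psi_A-\phi_A\pmod{2\pi}$.

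Third, the functional equation. Let $h_1,h_2\colon\IR/2\pi\IZ\to\IR/2\pi\IZ$ send the direction-angle of a vector of $\bV_1$, resp. $\bV_2$, to that of its image under $T|_{\bV_1}$, resp. $T|_{\bV_2}$; these are continuous, satisfy $h_j(\theta+\pi)\equiv h_j(\theta)+\pi$, and $h_1(0)=0$ since $T(I)=I$. Because $T(A)$ again has nonzero components, its angle data are $h_1(\phi_A),h_2(\psi_A)$; inserting the characterizations above into hypothesis (ii) forces $h_2(\psi)-h_1(\phi)$ to depend only on $\psi-\phi$, while (i) forces $h_2(\psi)+h_1(s-\psi)$ to be independent of $\psi$ for each $s$. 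With continuity this is a Cauchy-type equation whose only solutions are affine, $h_1(\phi)=m\phi$ and $h_2(\psi)=m\psi+\beta$ with a common integer slope $m$; since $T|_{\bV_1}$ is injective (it cannot send independent vectors to parallel ones) and $h_1(\theta+\pi)=h_1(\theta)+\pi$, necessarily $m=\pm1$. If $m=1$ then $T|_{\bV_1}=\mathrm{id}$ (scaling factor $1$ as $T(I)=I$) and $T|_{\bV_2}$ is a positive scalar $c_0$ times a rotation of $\bV_2$; conjugation by a suitable rotation $U\in{\mathcal U}_2(\IR)$ is the identity on $\bV_1$ and that rotation on $\bV_2$, so $T(A)=U\Phi_{c_0}(A)U^*$. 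If $m=-1$ then $T|_{\bV_1}$ is the reflection $aI+bJ\mapsto aI-bJ$ and $T|_{\bV_2}$ is a positive scalar $c_0$ times a reflection of $\bV_2$; conjugation by a suitable reflection $U\in{\mathcal U}_2(\IR)$ realizes both, so again $T(A)=U\Phi_{c_0}(A)U^*$. Either way, with $c_0>0$ and $U\in{\mathcal U}_2(\IR)$, this is the asserted form.

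The main obstacle is this third part: setting up the angle maps and, above all, checking that restricting hypothesis (i) or (ii) to the \emph{generic} pairs (both $\bV_j$-components nonzero) already supplies enough equations to determine $h_1,h_2$ — pairs inside $\bV_1\cup\bV_2$, or sharing a factor in $\bV_2$, impose nothing. A secondary point is the bookkeeping translating the conformal/anticonformal forms of $T|_{\bV_1},T|_{\bV_2}$ into conjugations by a single $U$; it also explains why the transposition map is excluded, since it is orientation-preserving on $\bV_2$ but orientation-reversing on $\bV_1$, whereas the two slopes must agree.
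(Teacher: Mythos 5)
Your proposal is correct, and while the opening reduction is the same as the paper's (Lemma~\ref{lam1} plus Theorem~\ref{P:unitary}(i) give $T(\bV_1)=\bV_1$ and $T(\bV_2)=\bV_2$), the core of your argument is genuinely different. The paper proceeds computationally: it first conjugates and composes with $\Phi_c^{-1}$ (invoking Lemma~\ref{L:special-map}) so that $T$ fixes diagonal matrices, then in Step~1 forces $d=a$, $e=-b$ from the orthogonality of the columns of $T(X)$ for $X$ attaining its norm at $e_1$, and in Step~2 kills $b$ and normalizes $a=1$ using the one-parameter family $X_t=\left[\begin{smallmatrix}1\\ t\end{smallmatrix}\right]\left[\begin{smallmatrix}1&1\end{smallmatrix}\right]$. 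You instead encode a generic $A=pR_{\phi}+qF_{\psi}$ by its rotation and reflection angles, observe via Lemma~\ref{L:basic} that norm-multiplicativity of a generic pair is exactly an angular coincidence ($\psi_A-\phi_A\equiv\psi_B-\phi_B$ for (ii), $\psi_B+\phi_B\equiv\psi_A-\phi_A$ for (i)), and turn the hypothesis into the functional equation $h_2(\psi)=h_1(\phi)+h_2(\psi-\phi)$ for the induced circle maps; continuity and injectivity of $h_1$ (a linear bijection acts injectively on rays) then give $h_1(\phi)=\pm\phi$, $h_2(\psi)=\pm\psi+\beta$, and conjugation by $R_{\beta/2}$ or $F_{\beta/2}$ realizes the residual rotation/reflection. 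I checked the key facts you rely on: $A^TA=(p^2+q^2)I+2pq\,R^TF$ so $\ell(A)$ is the $+1$-eigenline of the reflection $R^TF=F_{\psi-\phi}$, $A\ell(A)=R\ell(A)$, genericity is preserved by $T$ since each $T|_{\bV_j}$ is bijective, and setting $\phi=\psi$ in the functional equation reduces it to additivity of $h_2-h_2(0)$ on $\IR/2\pi\IZ$, whose continuous solutions are $\theta\mapsto m\theta$. Your route buys a uniform treatment of hypotheses (i) and (ii), avoids Lemma~\ref{L:special-map} in the necessity direction, and makes transparent why transposition is excluded (its slopes on $\bV_1$ and $\bV_2$ are $-1$ and $+1$, which cannot agree); the paper's route is more elementary and self-contained, trading the functional-equation machinery for two short explicit computations. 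To make your sketch airtight you should spell out the one-line reduction of the "Cauchy-type equation" to circle-group additivity and note explicitly that $(\phi_A,\psi_A)$ ranges over all of $(\IR/2\pi\IZ)^2$ as $A$ ranges over generic matrices, but these are routine.
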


\begin{proof}
	For both identities, by Lemma~\ref{lam1}, $T$ must map orthogonal  matrices (orthogonals for short) to multiples of orthogonals.  By
	Theorem~\ref{P:unitary}, and since $T$ is unital, $T(\bV_j) = \bV_j$ for $j=1,2$.
	By conjugating with a suitable orthogonal matrix we may assume
	$$T \left(\begin{bmatrix} 1 & 0 \\ 0 & -1 \end{bmatrix} \right) = c\begin{bmatrix} 1 & 0 \\ 0 & -1 \end{bmatrix}$$
	for some $c > 0$.  We may then compose $T$ with $\Phi_c^{-1}$ (which is a unital bijection satisfying both identities by Lemma~\ref{L:special-map}) and assume $c=1$, so $T$ fixes diagonal matrices.
	Moreover
	$$T \left(\begin{bmatrix} 0 & 1 \\ -1 & 0  \end{bmatrix} \right) = a \begin{bmatrix} 0 & 1 \\ -1 & 0  \end{bmatrix} + bI$$
	for some $a,b \in \IR$ (by further conjugating with $\left[ \begin{smallmatrix} 1 & 0 \\ 0 & -1 \end{smallmatrix} \right]$ if necessary, we may assume $a>0$),
	 	and
	$$T \left(\begin{bmatrix} 0 & 1 \\ 1 & 0  \end{bmatrix} \right) = d \begin{bmatrix} 0 & 1 \\ 1 & 0  \end{bmatrix} + e \begin{bmatrix} 1 & 0 \\ 0 & -1 \end{bmatrix}.$$
	for some $d,e \in \IR$.
	\bigskip

	For reference we note the following fact, which follows easily from SVD.

	\textbf{Observation:} Let $e,f$ be orthonormal vectors in $\IR^2$.  Let $X \in M_2(\IR)$.  Then
	\begin{equation}\label{E:NA}
		X \text{ attains its norm at } e \iff \\ X^TXe = \|X\|^2 e \iff Xe \perp Xf \text{ and } \|Xe\| \geq \|Xf\|.
	\end{equation}

	\textbf{Step 1:} We shall show that $d=a$ and $e=b$.

	Note $\|XE_{11}\| = \|X\| \|E_{11}\|$ if and only if $X$ attains its norm at $e_1$; by~\eqref{E:NA} this occurs if and only if the columns of $X$ are orthogonal, and the norm of the second column does not exceed the norm of the first column.  In this case we may write
	\begin{equation}\label{col-form}
		X = \begin{bmatrix} \alpha & t \beta \\ \beta & - t \alpha \end{bmatrix}
	\end{equation}
	for some $\alpha, \beta \in \IR$ and $t \in [-1,1]$.  Since $T(E_{11}) = E_{11}$ is symmetric, for both identities we must have $\|T(X) E_{11}\| = \|T(X)\| \|E_{11}\|$, so $T(X)$ must have the same form~\eqref{col-form} and its columns must be orthogonal.
	Computing, we have
	$$T(X) = \begin{bmatrix} \alpha + \frac{\beta}{2} (e-b+t(e+b)) & \frac{\beta}{2}(d-a+t(d+a)) \\ \frac{\beta}{2} (d+a+t(d-a)) & -t\alpha + \frac{\beta}{2}(-e-b+t(b-e)) \end{bmatrix}$$
		and the dot product of its columns equals
	$$\frac{\beta(1-t^2)}{2}(\alpha (d-a) - \beta (bd+ae)).$$
	Since this must equal zero for all $\alpha, \beta$, we must have $d=a$ and consequently $e=-b$.  Substituting these into our expression for $T$, we have
	$$T\left( \begin{bmatrix} 0 & 1 \\ 0 & 0 \end{bmatrix} \right) = \begin{bmatrix} 0 & a \\ 0 & b \end{bmatrix}, \quad T\left( \begin{bmatrix} 0 & 0 \\ 1 & 0 \end{bmatrix} \right) = \begin{bmatrix} -b & 0 \\ a & 0 \end{bmatrix}.$$

	\textbf{Step 2:} We shall show that $b=0$ and $a=1$.

	Observe that, for all $t \in \IR$, $\begin{bmatrix} 1 \\ 1 \end{bmatrix}$ is a norm-attaining vector for $X_t = \begin{bmatrix} 1 \\ t \end{bmatrix} \begin{bmatrix} 1 & 1 \end{bmatrix}$.  Let $C = X_1$.

	\begin{itemize}
		\item Suppose  $T$ satisfies the first identity.  Since $\|X_t C\| = \|X_t\| \|C\|$ we  must have  $\|T(X_t) T(C)\| = \|T(X_t)\| \|T(C)\|$.  Since $T(C) = \begin{bmatrix} 1-b & a \\ a & 1+b \end{bmatrix}$ attains its norm at a unique (up to sign) unit eigenvector $v$, $T(X_t)$ must attain its norm at $v$ for all $t \in \IR$.
		\item Suppose $T$ satisfies the second identity.
		Since $X_t$ and $C$ have a common norm-attaining vector for all $t \in \IR$, by Lemma~\ref{L:basic} so must $T(X_t)$ and $T(C)$.
	\end{itemize}

	Thus in either case, $T(X_t)$ attains its norm at a fixed vector $v$ for all $t \in \IR$.  Note
	$$T(X_t) = \begin{bmatrix} 1-bt & a \\ at & b+t \end{bmatrix} = P + tQ, \quad \text{ where } P = \begin{bmatrix} 1 & a \\ 0 & b \end{bmatrix}, \; Q = \begin{bmatrix} -b & 0 \\ a & 1 \end{bmatrix}.$$
	Since $P+tQ$ attains its norm at $v$, $(P+tQ)^T (P+tQ) v = \|P+tQ\|^2 v$, so $v$ is an eigenvector for $P^T P + t(Q^T P + P^TQ) + t^2 Q^T Q$ for all $t \in \IR$, and hence an eigenvector for
	$P^T P = \begin{bmatrix} 1 & a \\ a & a^2 + b^2 \end{bmatrix}$, $Q^T Q = \begin{bmatrix} b^2 + a^2 & a \\ a & 1 \end{bmatrix}$, and $Q^T P + P^T Q = 2b \begin{bmatrix} -1 & 0 \\ 0 & 1 \end{bmatrix}$.

	Since $a>0$, $P^T P$ is not diagonal, so the only way for $v$ to be an eigenvector for both $P^T P$ and $Q^T P + P^T Q$ is for $b=0$.  Then the only way for $v$ to be an eigenvector for both $P^T P$ and $Q^T Q$ is for $a=1$.
	Thus, $T$ reduces to the identity mapping and the proposition
	holds.
\end{proof}

\begin{proof}[Proof for the real case of  Theorem~\ref{n=2}] Sufficiency was proven in Lemma~\ref{L:special-map}.
To prove necessity, note that
$T$ must map orthogonals to multiples of orthogonals by Lemma~\ref{lam1}.
   By Lemma~\ref{L:unital} we may, after scaling $T$ if necessary, assume that $T$ is unital. The rest follows by Proposition~\ref{P:n=2,real}.
\end{proof}
\smallskip

We now turn our focus to the case when $\IF = \IC$.
Recall that  $\bV_3=\span_{\IR}\{I,\Sigma_1,\Sigma_2,\Sigma_3\}$.

\begin{lemma}\label{lem:aux2-by-2-reallinear}
 	Suppose $T\colon M_2(\IC) \rightarrow M_2(\IC)$ is a  unital real-linear bijection which satisfies any one among the identities
    \begin{itemize}
     \item[{\rm (i)}] $\|T(A)T(B)\| = \|T(A)\| \|T(B)\| \qquad \hbox{ whenever } \qquad \|AB\| = \|A\| \|B\|$.
     \item[{\rm (ii)}] $\|T(A)T(B)^\ast\| = \|T(A)\| \|T(B)^\ast\| \qquad \hbox{ whenever } \qquad \|AB^\ast\| = \|A\| \|B^\ast\|$.
      \end{itemize}
	Then there exist  unitary $U,V$ such that $L\colon X\mapsto  V T(U XU^\ast)V^\ast$ fixes all diagonal matrices in $\bV_3$ and multiplies off-diagonal ones in $\bV_3$  by some $w>0$.
\end{lemma}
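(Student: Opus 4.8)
The plan is to reduce $T$, using the structure results already proved, to a map completely determined by a short list of real parameters, and then to pin those parameters down by feeding the norm-multiplicative-pair hypothesis into Lemma~\ref{L:basic} at suitable rank-one matrices. For the reduction: $T$ is surjective and satisfies (i) or (ii), so Lemma~\ref{lam1} forces $T({\mathcal U}_2(\IC))\subseteq\IC\,{\mathcal U}_2(\IC)$ --- for $A\in{\mathcal U}_2$ we have $\|AB\|=\|A\|\|B\|$ and $\|AB^*\|=\|A\|\|B^*\|$ for every $B$, hence $\|T(A)C\|=\|T(A)\|\|C\|$ for every $C$ in the range of $T$, so $T(A)\in\IC\,{\mathcal U}_2$ by Lemma~\ref{lam1}. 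Since $T$ is unital, Theorem~\ref{P:unitary}(ii) (via Lemma~\ref{lem:reduction-2-by-2}), applied to $T$, provides unitaries $U_0,V_0$ and $\mu\in\IC\setminus\IR$ such that $L_0\colon X\mapsto V_0T(U_0XU_0^*)V_0^*$ is unital with
\[
L_0(iI)=\mu I,\qquad L_0(\Sigma_j)=a_j\Sigma_j+b_jI,\qquad L_0(i\Sigma_j)=\mu L_0(\Sigma_j),
\]
where $a_1\ge a_2\ge|a_3|>0$ and $b_1,b_2,b_3\in\IR$. Conjugation by fixed unitaries preserves singular values and commutes with multiplication and with the adjoint, so $L_0$ satisfies the same identity as $T$; replacing $T$ by $L_0$ we may assume $T=L_0$, and by real-linearity $L_0(iW)=\mu L_0(W)$ for \emph{every} $W\in\bV_3$, so $L_0$ is completely determined by $(\mu,a_1,a_2,a_3,b_1,b_2,b_3)$.

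The statement to be proved is exactly $b_1=b_2=b_3=0$, $a_3=1$, and $a_1=a_2$: then $L_0$ fixes the diagonal subspace $\span_\IR\{I,\Sigma_3\}$ of $\bV_3$ pointwise and scales the off-diagonal subspace $\span_\IR\{\Sigma_1,\Sigma_2\}$ by $w:=a_1>0$, so $U:=U_0$, $V:=V_0$ work. From $E_{11}=\tfrac12(I-i\Sigma_3)$, $E_{22}=\tfrac12(I+i\Sigma_3)$, $E_{12}=-\tfrac i2\Sigma_1+\tfrac12\Sigma_2$, $E_{21}=-\tfrac i2\Sigma_1-\tfrac12\Sigma_2$ together with the reduction one writes down $L_0(E_{jk})$ and $L_0(iE_{jk})$ explicitly. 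In particular $L_0(E_{11}),L_0(E_{22})$ are diagonal, they sum to $I$, and --- since $\im\mu\neq0$ and $a_3\neq0$ --- each has a \emph{unique} norm-attaining line, one being $\IC e_1$ and the other $\IC e_2$.

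The driving observation is that $xy^*$ attains its norm at $y$, so for each fixed $y$ every pair drawn from the one-sided ideal $\{xy^*:x\in\IC^2\}$ (real dimension $4$) is norm-multiplicative, both for $\|AB\|$ and for $\|AB^*\|$. We feed Lemma~\ref{L:basic} the families $(E_{11},e_1z^*),(E_{22},e_2z^*)$ in case (i) and $(E_{11},ze_1^*),(E_{22},ze_2^*)$ in case (ii), as $z$ runs over $\IC^2$, together with a few mixed pairs such as $(E_{11}+E_{12},E_{21}+E_{22})$. Since $L_0(E_{11})$ and $L_0(E_{22})$ have unique norm-attaining lines, Lemma~\ref{L:basic} turns each ideal family into the requirement that \emph{every} matrix $N$ in the corresponding real $4$-dimensional image space ($L_0$ of a row- or column-ideal) either is a scalar multiple of a unitary, or has $N$ --- resp.\ $N^*$, in case (i) --- attaining its norm at a prescribed one of the lines $\IC e_1,\IC e_2$, i.e.\ has its columns (resp.\ rows) orthogonal with a prescribed one of them the longer. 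Imposing this on all real linear combinations of the four basis images from the previous paragraph and comparing coefficients forces $b_1=b_2=b_3=0$ and $a_1=a_2a_3$, and (comparing the norm-attaining line of $L_0(E_{11})$ with that of $L_0(E_{21})$, which lie in the same image ideal) $a_3>0$ --- a sign the reduction's conjugations cannot alter, as they act on $\span_\IR\{\Sigma_1,\Sigma_2,\Sigma_3\}$ through $\mathrm{SO}(3)$. Finally, after these normalizations the mixed pair $(E_{11}+E_{12},E_{21}+E_{22})$ --- which Lemma~\ref{L:basic} sends to ``$L_0(E_{11}+E_{12})$ and $L_0(E_{21}+E_{22})$ share a norm-attaining vector'' --- reads $a_2=1$; with $a_1\ge a_2=1\ge|a_3|$ and $a_1=a_2a_3$ this gives $a_3=1$ and $a_1=a_2$, and the lemma follows.

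The technical heart is the coefficient comparison in the last step: each rank-one pair, once the singular-value bookkeeping is unwound, yields a batch of quadratic relations among $\mu$ and the $a_j,b_j$, and one must select enough pairs --- and split off the sub-cases in which an individual image happens to be a scalar multiple of a unitary, where Lemma~\ref{L:basic} is vacuous --- to close the system. Carrying the unknown nonreal $\mu$ through all of this, rather than the value $i$ available under the complex-linear assumption, is what makes the real-linear case the demanding one.
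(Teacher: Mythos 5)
Your overall strategy is the same as the paper's: reduce via Lemma~\ref{lam1} and Theorem~\ref{P:unitary}(ii) to a map determined by $\mu$ and the parameters $a_j,b_j$, then feed norm-multiplicative pairs anchored at a matrix with a unique norm-attaining line into Lemma~\ref{L:basic} to force orthogonality of rows/columns of images, and extract polynomial relations. (The paper anchors at the family $A_r=\diag((r+1)i,(r-1)i)$ in the \emph{second} slot and lets the first entry range over all matrices attaining their norm at $e_1$; you anchor at $E_{11}$ in the first slot and let the second entry range over a rank-one ideal. Both are legitimate.) However, there are genuine gaps.

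First, your ``driving observation'' is false for the ordinary product: for $A=x_1y^*$, $B=x_2y^*$ with $\|y\|=1$ one has $\|AB\|=|y^*x_2|\,\|x_1\|$, which equals $\|A\|\|B\|$ only when $x_2\parallel y$. Your concrete families $(E_{11},e_1z^*)$ happen to be norm-multiplicative because $E_{11}$ attains its norm exactly at $e_1=Bx$ for the top right singular vector $x$ of $B=e_1z^*$, but the general claim you lean on is wrong, and it matters for your last step: the mixed pair $(E_{11}+E_{12},\,E_{21}+E_{22})=(e_1(e_1+e_2)^*,\,e_2(e_1+e_2)^*)$ satisfies $\|AB\|=\sqrt2\ne 2=\|A\|\|B\|$, so it is \emph{not} a norm-multiplicative pair under hypothesis (i) (it is one under (ii), via Lemma~\ref{L:basic}(ii)). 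As written, the step that pins down the last parameter therefore does not apply in case (i), and that case is left open. A pair that does work for (i) is, e.g., $(x(e_1+e_2)^*,\,(e_1+e_2)(e_1+e_2)^*)$ — essentially what the paper uses later to show $w=1$.

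Second, the entire computational core — that imposing orthogonality on the images of the ideal families forces $b_1=b_2=b_3=0$ and $a_1=a_2a_3$ — is asserted, not carried out; you yourself flag it as ``the technical heart.'' These relations are consistent with the known answer but unverified, and note that $a_1=a_2a_3$ together with the ordering $a_1\ge a_2\ge|a_3|>0$ does not by itself yield the lemma (e.g.\ $(6,3,2)$ satisfies both), so everything hinges on the additional relation from the mixed pair, which is exactly where the argument breaks in case (i). Moreover your chain of relations, if correct, would force $a_1=a_2=a_3=1$, i.e.\ $w=1$ — strictly stronger than the lemma's conclusion. That happens to be true under the full hypotheses (the paper proves $w=1$ separately afterwards), but the fact that your intermediate lemma-level argument overshoots to the final answer, using a pair that is invalid in one of the two cases, is a strong sign the coefficient bookkeeping has not actually been checked. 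Until the orthogonality identities are written out and solved (as the paper does with its explicit trigonometric polynomial in $e^{i\theta}$), this remains a plan rather than a proof.
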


\begin{proof}
    By Lemma~\ref{lam1}, $T$ maps unitary matrices into scalar multiples of unitary matrices.
Using the unital assumption, we may apply Theorem~\ref{P:unitary} (ii)
 and modify $T$ to the map $X \mapsto VT(UXU^*)V^*$ with $U, V \in U_2(\IC)$ and assume that

\begin{equation}\label{eq:AB*-on-2-by-2anew}
\begin{aligned}
  &T \left( \left[\begin{smallmatrix} i & 0 \\ 0 & -i \end{smallmatrix}\right] \right) = a \left[\begin{smallmatrix} i & 0 \\ 0 & -i \end{smallmatrix}\right] +bI,\quad T \left( \left[\begin{smallmatrix} 0 & i \\ i & 0 \end{smallmatrix}\right] \right) = w\left[\begin{smallmatrix} 0 & i \\ i & 0 \end{smallmatrix}\right] +cI, \quad T \left( \left[\begin{smallmatrix} 0 & 1 \\ -1 & 0 \end{smallmatrix}\right] \right) = z\left[\begin{smallmatrix} 0 & 1 \\ -1 & 0 \end{smallmatrix}\right] + dI\\
  &T(iX)=\mu T(X);\qquad X\in\bV_3
\end{aligned}
\end{equation}
for some $\mu\in\IC\setminus\IR$ and some real constants satisfying $a\ge w\ge|z| > 0$. Notice that, due to bijectivity and unitality, $a, w, z$  are nonzero, so $a,w>0$.

Let $A_r = \begin{bmatrix} (r+1)i& 0 \\ 0 & (r-1)i \end{bmatrix}$, $r>0$. By~\eqref{eq:AB*-on-2-by-2anew} and as $T$ is unital,
$$T(A_r)=(b+r\mu) I +a\begin{bmatrix} i &0 \\ 0 & -i \end{bmatrix} = \begin{bmatrix}
 i a +b +r\mu  & 0 \\
 0 & -i a +b +r\mu
 \end{bmatrix} $$
Write $\mu=|\mu|e^{i\theta}$ and assume first that $\sin\theta>0$. Since $a,r>0$ and $b\in\IR$, one computes that
$A_r=-A_r^\ast$, $T(A_r)$, and $T(A_r)^\ast$ all
 attain their norms only at scalar multiples of their eigenvector $e_1$ (their $(11)$-entry has larger modulus than their $(22)$-entry).  Notice that $\|XA_r\| = \|X \| \|A_r\|$  for all $r > 0$ if and only if $X$
attains its norm on the vector $e_1$.
 This is equivalent to its SVD equaling $X =x e_1^\ast+ye_2^\ast$ for some orthogonal $x,y$ with $\|x\|\ge \|y\|$,  that is, $X$ must have the form
\begin{equation}\label{e1norm1anew}
X = \begin{bmatrix} \alpha & t \bar{\beta} \\ \beta & -t\bar{\alpha} \end{bmatrix}
\end{equation}
for some $\alpha, \beta, t \in \IC$ with $|t| \leq 1$. Thus, assuming either (i) or (ii) in the statement of this lemma, $X$ from~\eqref{e1norm1anew} is mapped into $T(X)$ which must also attain its norm at $e_1$ and hence have the same form~\eqref{e1norm1anew}.

However if, with $\mu=|\mu|e^{i\theta}$, one has $\sin\theta<0$, then  one computes that $T(A_r)$ and $T(A_r)^*$ both attain their
norm only at scalar multiples of their eigenvector $e_2$. Proceeding as above one sees that, under either of the assumptions (i) or (ii), $X$ of the form~\eqref{e1norm1anew} is mapped into a matrix $T(X)$ which attains its norm at $e_2$, hence is of the same form  as in~\eqref{e1norm1anew}  except that $|t| >1$. Hence, regardless if $\sin\theta$ is positive or negative, $X$ of the form~\eqref{e1norm1anew} is mapped into a matrix $T(X)$ with  its first  column orthogonal to its  second one.
 If we set $\alpha=\cos\pi/4$ and $\beta = e^{i\theta}\sin\pi/4$ and $t=1/3$, then under assumptions (i)--(ii),
$$T(X) = \frac{1}{6 \sqrt{2}}
\begin{bmatrix}
 x & y \\
 u & v \\
\end{bmatrix}$$
where
$$x=-4 i a \mu -4 b \mu -(c+i d) e^{-i \theta } (2 \mu -i)-(c-i d) e^{i \theta } (2 \mu +i)+2,$$
$$y =e^{i \theta } (w-z) (1-2 i \mu
   )-i e^{-i \theta } (w+z) (2 \mu -i),$$
$$u=e^{i \theta } (w+z) (1-2 i \mu )+e^{-i \theta } (z - w) (1 + 2 i  \mu),$$
$$v=4 i a \mu -4 b \mu -(c+i d) e^{-i \theta } (2 \mu
   -i)-(c-i d) e^{i \theta } (2 \mu +i)+2.$$

Since  the columns of $T(X)$  must be orthogonal, so
\begin{equation}\label{orth-cols1anew}
	x \bar{y} + u \bar{v} = 0.
\end{equation}
The left-hand side of the above equation is a trigonometric polynomial in $e^{i\theta}$:
 $$-8(\mu-\mathrm{Re}\,\mu  )  \left(i e^{-i \theta } (a-i b+1) (w-z)-e^{i \theta } (b+i (a-1)) (w+z)-2 (c z+i d w)\right)=0.$$
Its coefficients therefore vanish identically and since, by the assumptions $\mu\notin\IR$, $a,w>0$, and all other variables are real with $z \ne 0$, the only possibility is
$$a=1,\quad b=0,\quad z=w>0,\quad d=c=0.$$
This shows that $T$ is the identity on diagonal matrices from $\bV_3$, while on off-diagonal matrices from $\bV_3$ it acts as
$$T(\Sigma_1)=w \Sigma_1,\quad T(\Sigma_2)=w\Sigma_2,$$
as claimed.
\end{proof}

\begin{proof}[Proof for the complex case of  Theorem~\ref{n=2}]
By Lemma~\ref{L:unital} we may, after scaling $T$ if necessary, assume that $T$ is unital.  Then by Lemma~\ref{lem:aux2-by-2-reallinear} we can pre- and post- compose it with a conjugation by suitable unitaries to achieve that it also fixes diagonal matrices in $\bV_3$ and multiplies off-diagonal ones in $\bV_3$ by some $w>0$. Notice that $\bV_3$ is invariant under unitary conjugation. Hence, we only need to show that $w=1$.

To this end, we use
the same arguments as in Step~2 of  the proof of Proposition~\ref{P:n=2,real}.
Let $B = \left[\begin{smallmatrix}
1  & 1\\
1& 1\end{smallmatrix}\right]$ and $A=
\left[\begin{smallmatrix}
1 & 1\\
te^{i\theta} & te^{i\theta}\end{smallmatrix}\right]$  ($t,\theta\in\IR$); their
images are
$T(B)= \left[
\begin{smallmatrix}
 1 & -i \mu  w \\
 -i \mu  w & 1 \\
\end{smallmatrix}
\right]$, which is a normal matrix that attains its norm only at scalar multiples of eigenvectors $(1,\pm1)^t$, and $T(A)= \frac{1}{2}\left[
\begin{smallmatrix}
 \left(-i \mu +e^{-i \theta } (1+i \mu ) t+1\right)
   & -i w \left(\mu +e^{-i \theta } (\mu -i)
   t+i\right) \\
 - i w \left(\mu +e^{i \theta } (\mu +i) t-i\right) &
    \left(i \mu +e^{i \theta } (1-i \mu ) t+1\right)
\end{smallmatrix}
\right]$.  Since $\|AB\| = \|A\| \|B\|$, $\|T(A) T(B)\| = \|T(A)\| \|T(B)\|$.  A calculation, as in Step~2 of  the proof of Proposition~\ref{P:n=2,real}, then
 shows that $w=1$. Hence, $T$ is identity on $\bV_3$ and, by Theorem~\ref{P:unitary}(ii), multiplication by $\frac{\mu}{i} $ on $i\bV_3$.
 \medskip

 To prove the converse we note first that,  as shown in~\eqref{eq:unitary-in-2-by-2}, for any unitary $U\in M_2(\IC)$ there exists a (unimodular) scalar $\alpha=\alpha_1+i\alpha_2$  such that $U=\alpha W$ for some $W\in\bV_3$. Then,
\begin{equation*}
 \begin{aligned}
T(U)&=\alpha_1 T(W)+\alpha_2 T(iW)=(\alpha_1+\mu\alpha_2)W=\frac{\alpha_1+\mu\alpha_2}{\alpha} U.\\
T(iU) &=-\alpha_2T(W)+\alpha_1T(iW)=\frac{-\alpha_2+\mu\alpha_1}{\alpha} U=\frac{-\alpha_2+\mu\alpha_1}{\alpha_1+\mu\alpha_2} T(U)
\end{aligned}
\end{equation*}
 Notice also that the space $\bV_3$, which is a real-linear span of the identity and trace-zero skew-hermitian matrices, is closed under unitary conjugation, and also closed under multiplication (since $\Sigma_i\in\bV_3$ act as quaternionic units). In particular, if $U,V$ are unitary such that $UV^\ast\in\alpha \bV_3$, then also $U\Sigma_3V^\ast=U\Sigma_3 U^\ast\cdot UV^\ast\in \bV_3\cdot(\alpha \bV_3)=\alpha\bV_3$. This shows that
\[
\begin{aligned}
T(UV^\ast) &= \frac{\alpha_1 + \mu \alpha_2}{\alpha}\, UV^\ast, &
T(U \Sigma_3 V^\ast) &= \frac{\alpha_1 + \mu \alpha_2}{\alpha}\, U \Sigma_3 V^\ast, \\
T(i UV^\ast) &= \frac{-\alpha_2 + \mu \alpha_1}{\alpha}\, UV^\ast, &
T(i U \Sigma_3 V^\ast) &= \frac{-\alpha_2 + \mu \alpha_1}{\alpha}\, U \Sigma_3 V^\ast.
\end{aligned}
\]
Now, one decomposes
$$\diag(1,te^{i\theta})=
 \tfrac{1}{2}\,\bigl(1 + t \cos\theta + t i \sin\theta \bigr)\,I_2
\;+\;
\tfrac{1}{2}\,\bigl(-i + t i \cos\theta - t \sin\theta \bigr)\,\Sigma_3
;\qquad 0\le t\le 1,\;0\le \theta\le 2\pi.$$
Then, $T$ maps an arbitrary normalized $A=UD V^\ast\in M_2(\IC)$, given in its SVD with $D=\diag(1,t)$ (here, $0\le t\le 1$), and with $\alpha=\alpha_1+i\alpha_2$ as above, into
\begin{equation*}
 \begin{aligned}
  T(A)&=T\bigl(\tfrac{1}{2}\,\bigl(1 + t\bigr) UV^\ast\bigr)+T\bigl(\tfrac{1}{2}\,\bigl(-i + t i  \bigr)U\Sigma_3U^\ast\cdot UV^\ast\bigr)\\
  &=\tfrac{\alpha_1+\mu\alpha_2}{2\alpha}\,U\Bigl(\bigl(1 + t  )I \Bigr)V^\ast+\tfrac{-\alpha_2+\mu\alpha_1}{2\alpha}\,U\Bigl( \bigl(-1+t  )\Sigma_3 \Bigr)V^\ast\\
  &= U\left(\frac{\left(\alpha _2-i \alpha _1\right) (\mu +i) }{2 \left(\alpha _1+i \alpha
   _2\right)}\left[
\begin{matrix}
 1 & 0 \\
 0 & t \\
\end{matrix}
\right]+\frac{\left(\alpha _2+i \alpha _1\right) (\mu -i)
   }{2 \left(\alpha _1+i \alpha _2\right)}\left[
\begin{matrix}
t & 0 \\
 0 & 1 \\
\end{matrix}
\right]\right)V^\ast
 \end{aligned}
\end{equation*}
                   Clearly, this is  \emph{a generalized SVD} of $T(A)$, (i.e., it takes the form $T(A) = UDV^\ast$ for unitary $U,V$ and diagonal $D=\frac{1}{2\alpha}\left[
\begin{matrix}
 i  (\mu -i) t \bar{\alpha}-i \alpha  (\mu +i) & 0 \\
 0 & i (\mu -i) \bar{\alpha}-i \alpha  (\mu +i) t \\
\end{matrix}
\right]$, which is no longer required to be positive semidefinite). One further computes that
$$|d_{11}|^2-|d_{22}|^2=(1-t^2)\mathrm{Im}(\mu).$$
and since $0\le t\le 1$ (and $\alpha\notin\IR$), then $|d_{11}|\ge |d_{22}|$ if and only if $\mathrm{Im}(\mu)>0$.

Notice that this condition is independent of $\alpha$ and $t$, and depends only on the signature of the imaginary part of $\mu$.
Moreover, since the generalized SVD of $A$ and of $T(A)$ share the same unitary matrices $U$ and $V$, we see that either $A$ and $T(A)$ attain their norm on the same vector $v=Ve_1$, and map it into the same vector $u=Av \propto Ue_1$ (modulo a scalar multiple) for every $A\in M_2(\IC)$ (when $\mathrm{Im}(\mu)>0$) or else  $T(A)$ attains its norm on a vector $v=Ve_2$, perpendicular to a vector $Ve_1$ where $A$ attains its norm, and  maps it into $Ue_2$, which is again perpendicular to $A(Ve_1)\propto Ue_1$ for every $A\in M_2(\IC)$ (when $\mathrm{Im}(\mu)<0$).

In both cases, (when $\mathrm{Im}(\mu)>0$ or when $\mathrm{Im}(\mu)<0$) we see that if $B$ attains its norm on $Av$, where $v$ is the vector where $A$ attains its norm, then a similar property  is true for $T(A)$ and $T(B)$ (perhaps with  vectors perpendicular to $v$ and to $Av$). But this is exactly equivalent to $T$ preserving norm-multiplicative pairs.
\end{proof}

\subsection{Related results}

By a slight adaptation of techniques we developed for preservers of norm-multiplicative pairs we can also classify the preservers of sesqui-norm-multiplicative pairs on  $2$-by-$2$ matrices.

\begin{theorem}\label{T:n=2}
Suppose $T\colon M_2(\IF) \to M_2(\IF)$ is a real-linear bijection. Then, the following are equivalent.
	\begin{itemize}
		\item[{\rm (a)}] $\|T(A)T(B)^*\| = \|T(A)\| \|T(B)^*\|$ whenever $\|AB^*\| = \|A\| \|B^*\|$.
		\item[{\rm (b)}] $\|T(A)^*T(B)\| = \|T(A)^*\| \|T(B)\|$ whenever $\|A^*B\| = \|A^*\| \|B\|$.
		\item[{\rm (c)}] The map $X \mapsto T(I)^{-1} T(X)$ satisfy  Theorem {\rm~\ref{n=2} (a)--(b)}.
	\end{itemize}
\end{theorem}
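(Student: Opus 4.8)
The plan is to reduce everything to the already-established Theorem~\ref{n=2} via the transpose/adjoint trick encapsulated in Lemma~\ref{L:equiv}, together with the unitality reduction of Lemma~\ref{L:unital} and the observation (already used in Section~2) that the maps $X\mapsto X^T$ and $X\mapsto \bar X$ interact nicely with the various norm-multiplicative conditions. First I would establish the equivalence of (a) and (b): apply Lemma~\ref{L:equiv} with $S=T$, noting that $\Phi(X)=X^*$ is conjugate-linear so $T'=\Phi\circ T\circ\Phi$ is again real-linear and bijective; this immediately swaps condition (a) for the ``$A^*B$''-flavoured condition on $T'$, and a symmetric application returns (b), so (a)$\iff$(b). (Alternatively one can note directly that replacing $T$ by $X\mapsto \overline{T(\bar X)}$ or by $X\mapsto T(X^*)^*$ turns one condition into the other, since $\|C^*D\|=\|D^*C\|$ and both equal the relevant product of norms.)

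Next I would prove (a)$\implies$(c). Suppose $T$ satisfies (a). By Lemma~\ref{lam1}, taking $A$ a multiple of a unitary forces $\|T(A)T(B)^*\|=\|T(A)\|\|T(B)^*\|$ for all $B$, so $T(A)\in\IF\,{\mathcal U}_2$; thus $T$ maps ${\mathcal U}_2(\IF)$ into $\IF\,{\mathcal U}_2(\IF)$, and in particular $T(I)\in\IF\,{\mathcal U}_2$. Replacing $T$ by $L\colon X\mapsto T(I)^{-1}T(X)$ (which is still bijective real-linear, and still satisfies a condition of type (a) up to the fixed unitary-multiple factor $T(I)^{-1}$ — here one uses that left multiplication by a multiple of a unitary preserves the norm-multiplicative relation on the relevant side), we may assume $L$ is unital. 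Now I would invoke Lemma~\ref{lem:aux2-by-2-reallinear}, whose hypothesis (ii) is exactly condition (a) after the unital reduction: it yields unitaries $U,V$ such that $X\mapsto VL(UXU^*)V^*$ fixes the diagonal part of $\bV_3$ and scales the off-diagonal part of $\bV_3$ by some $w>0$. In the real case this is precisely the situation of Proposition~\ref{P:n=2,real}(ii), which forces $w=1$ and gives form (a) of Theorem~\ref{n=2} (after absorbing the $\Phi_c$ factor). In the complex case one runs the $w=1$ argument exactly as in the complex half of the proof of Theorem~\ref{n=2} (the Step~2 calculation with $B=\left[\begin{smallmatrix}1&1\\1&1\end{smallmatrix}\right]$), except that now the norm-attaining bookkeeping is on the $AB^*$ side rather than the $AB$ side; by Lemma~\ref{L:basic}(ii) a common norm-attaining vector for $A,B$ is the right notion, and this is if anything cleaner. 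One then concludes $L$ has the form $X+iY\mapsto X+\mu Y$ on $\bV_3\oplus i\bV_3$, which is Theorem~\ref{n=2}(b).

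Finally, for (c)$\implies$(a) I would check sufficiency directly. For $\IF=\IR$ this is the content of Lemma~\ref{L:special-map} (its second displayed assertion is exactly the $AB^*$ statement for $\Phi_c$), combined with the fact that conjugation $A\mapsto UAU^*$ and scaling obviously preserve the relation $\|AB^*\|=\|A\|\|B^*\|$. For $\IF=\IC$ the map $L\colon X+iY\mapsto X+\mu Y$ was shown, in the converse half of the proof of Theorem~\ref{n=2}(b), to act on a generalized SVD $A=UDV^*$ by $L(A)=U D'V^*$ with the same unitaries $U,V$; hence $A$ and $L(A)$ attain their norms at $Ve_1$ (or both at $Ve_2$, depending only on $\operatorname{sign}\operatorname{Im}\mu$), and by Lemma~\ref{L:basic}(ii) this common-norm-attaining-vector behaviour is precisely what is needed to preserve $AB^*$-pairs. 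The main obstacle, and the only place requiring genuine care rather than citation, is verifying that Lemma~\ref{lem:aux2-by-2-reallinear} and the $w=1$ step genuinely go through with hypothesis (ii) in place of (i): but Lemma~\ref{lem:aux2-by-2-reallinear} is stated with both hypotheses, and its proof already handles (ii) in parallel, so in fact the reduction is immediate and the theorem follows.
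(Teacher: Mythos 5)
Your proposal is correct and follows essentially the same route as the paper: Lemma~\ref{lam1} plus the $T(I)^{-1}$ normalization, Proposition~\ref{P:n=2,real} in the real case, Lemma~\ref{lem:aux2-by-2-reallinear} followed by the $w=1$ computation in the complex case (where the paper simply notes that the test matrix $B$ is Hermitian, so $B^*=B$ and the Theorem~\ref{n=2} calculation applies verbatim), and Lemma~\ref{L:equiv} together with the generalized-SVD/common-norm-attaining-vector argument for the converse. The only point to tighten is your opening claim that Lemma~\ref{L:equiv} ``immediately'' yields (a)$\iff$(b): that lemma relates condition (a) for $T$ to condition (b) for $\Phi\circ T\circ\Phi$, so the equivalence for a fixed $T$ is only closed by passing through (c) and noting that the class of maps in (c) is stable under $S\mapsto\Phi\circ S\circ\Phi$ --- which is exactly what your subsequent steps (and the paper) do anyway.
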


\begin{proof}
We start by showing that both (a) or (b) imply (c). By Lemmas~\ref{L:equiv} and~\ref{lam1}, such $T$ maps $U_2(\IF)$ into $\IF U_2(\IF)$, so $T(I)\in\IF\,{\mathcal U}_2(\IF)$.  Since the map $X \mapsto \gamma UX$ satisfies both (a) and (b) for all nonzero $\gamma \in \IF$ and $U \in U_2(\IF)$, we may replace $T$ with $X\mapsto T(I)^{-1}T(X)$ and thereby assume that $T$ is unital.
When $\IF = \IR$, the result follows from Proposition~\ref{P:n=2,real} and Lemma~\ref{L:equiv}.

When $\IF = \IC$ and (a) holds, we apply Theorem~\ref{P:unitary} and Lemma~\ref{lem:aux2-by-2-reallinear}.  One can then show that $w=1$ in Lemma~\ref{lem:aux2-by-2-reallinear} by using the same argument as that in the second paragraph of the proof the complex case of Theorem~\ref{n=2} (because $B^*=B$) so (c) holds.  By using Lemma~\ref{L:equiv}, (b) implies (c) also holds.

Conversely, in the real case, Lemmas \ref{L:special-map} and \ref{L:equiv}) show that (c) implies both (a) and (b).  In the complex case, (c) implies (a) by using Lemma \ref{L:basic} and arguments similar to those in the proof of Theorem~\ref{n=2}, complex case.  As before, one then obtains (c) implies (b) by using Lemma~\ref{L:equiv}.
\end{proof}

\medskip\noindent
{\bf \large Acknowledgment}

This research was supported in part by the Slovenian Research Agency
(research program P1-0285, research project N1-0210).
Li is an affiliate member
of the Institute for Quantum Computing, University of Waterloo; his research was also supported
by the Simons Foundation Grant 851334.

We thank Dr.\ Sushil Singla for some helpful discussion.

\noindent
(Kuzma) Department of Mathematics, University of Primorska, Slovenia; Institute of Mathematics,
Physics, and Mechanics, Slovenia. E-mail: bojan.kuzma@famnit.upr.si

\medskip\noindent
(Li) Department of Mathematics, College of William \& Mary, Williamsburg, VA 23187, USA.
E-mail: ckli@math.wm.edu

\medskip\noindent
(Poon) Department of Mathematics, Embry-Riddle Aeronautical University, Prescott AZ 86301,
USA. E-mail: poon3de@erau.edu

\end{document}